\newcommand\Z{\mathbb{Z}}
\newcommand{\CRS}{\Gamma}
\newcommand{\D}{D}
\newcommand{\B}{B}
\DeclareMathOperator{\Aut}{Aut}
\DeclareMathOperator{\Mod}{Mod}
\DeclareMathOperator{\Stab}{Stab}
\theoremstyle{plain}
\newtheorem{theorem}{Theorem}[section]
\newtheorem{lemma}[theorem]{Lemma}
\newcommand{\p}[1]{\bigskip \noindent \emph{#1}.}
\title{Homomorphisms of commutator subgroups of braid groups}
\author{Kevin Kordek}
\author{Dan Margalit}
\address{Kevin Kordek \\ School of Mathematics\\ Georgia Institute of Technology \\ 686 Cherry St. \\ Atlanta, GA 30332}
\email{kevin.kordek@math.gatech.edu}
\address{Dan Margalit \\ School of Mathematics\\ Georgia Institute of Technology \\ 686 Cherry St. \\ Atlanta, GA 30332}
\email{margalit@math.gatech.edu}
\thanks{Both authors were supported by the National Science Foundation under Grant No. DMS - 1057874.}
\date{}
\begin{document}

\maketitle

\vspace{-2em}

\begin{abstract}
We give a complete classification of homomorphisms from the commutator subgroup of the braid group on $n$ strands to the braid group on $n$ strands when $n$ is at least~7.  In particular, we show that each nontrivial homomorphism extends to an automorphism of the braid group on $n$ strands.  This answers four questions of Vladimir Lin.  Our main new tool is the theory of totally symmetric sets.
\end{abstract}

\section{Introduction}

 Let $\B_n$ denote the braid group on $n$ strands and let $\B_n'$ denote its commutator subgroup.  We say that two homomorphisms $\rho_1 :  \B_n' \to \B_n$ and $\rho_2 :  \B_n' \to \B_n$ are \emph{equivalent} if there is an automorphism $\alpha$ of $\B_n$ such that $\alpha\circ \rho_1 = \rho_2$.  The following is our main result.
 
\begin{theorem}\label{thm:main}
Let $n\geq 7$, and let $\rho : \B_n'\rightarrow \B_n$ be a nontrivial homomorphism.  Then $\rho$ is equivalent to the inclusion map.
\end{theorem}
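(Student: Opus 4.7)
The plan is to exploit the theory of totally symmetric sets flagged in the abstract. Recall that a totally symmetric set in a group $G$ is a tuple $(x_1,\dots,x_k)$ of pairwise commuting elements such that every permutation of the $x_i$ is realized by conjugation in $G$. The key structural fact I would prove (or cite) first is a classification: in $\B_n$, any totally symmetric set of size roughly $\lfloor n/2 \rfloor$ is, up to an automorphism of $\B_n$ and up to replacing each element by a common power, the standard odd-indexed generator set $(\sigma_1,\sigma_3,\sigma_5,\dots)$. This gives rigidity because totally symmetric sets are preserved (as tuples, after discarding trivial images) by homomorphisms into any target that admits the right symmetries.

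Next I would locate a canonical totally symmetric set inside $\B_n'$. Natural candidates are elements of the form $y_i = \sigma_i \sigma_{i+1}^{-1}$ for odd $i$, or commutators $[\sigma_i, \sigma_j]$-type elements between far-apart generators; either way one wants $\lfloor n/2 \rfloor$ pairwise commuting elements on which an obvious subgroup of $\B_n'$ acts by the full symmetric group (the relevant permutations come from the half-twist braids, which lie in $\B_n'$ when squared, and from even-degree permutation symmetries inherited from the quotient $\B_n/\B_n' \cong \Z$). Applying the classification above to $\rho$ of this totally symmetric set, I can after composing with a suitable $\alpha \in \Aut(\B_n)$ assume that $\rho(y_i)$ is a common power of $\sigma_i$ for each odd $i$ (up to a conjugation). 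A parity/abelianization check should force that common power to be $1$, matching the inclusion on the chosen $y_i$.

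With $\rho$ pinned down on half of a generating set, the next step is bootstrapping. I would argue that the remaining elements needed to generate $\B_n'$ (e.g.\ the $y_i$ for even $i$, or conjugates by a half-twist) must, by centralizer considerations inside $\B_n$, land on the images dictated by the inclusion. Concretely: the centralizer of $\sigma_1\sigma_3\cdots$ in $\B_n$ is understood (it is essentially generated by those $\sigma_i$ together with far commuting generators), so any element of $\B_n'$ that commutes with the already-placed generators has very restricted image. Combined with the braid relations $\sigma_i \sigma_{i+1}\sigma_i = \sigma_{i+1}\sigma_i\sigma_{i+1}$, which $\rho$ must respect on products lying in $\B_n'$, the image of every generator of $\B_n'$ is forced.

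The main obstacle I expect is the rigidity statement for totally symmetric sets in $\B_n$ itself: ruling out ``exotic'' totally symmetric sets of the maximal size, and in particular showing that no such set can consist of non-trivial proper powers or of conjugates of a non-standard element. This is where the hypothesis $n\geq 7$ should enter, giving enough room for the symmetric group $S_{\lfloor n/2 \rfloor}$ to act and forcing each element of the image totally symmetric set to have a very small support (i.e.\ to be conjugate to a power of a single standard generator). Once that rigidity is established, the rest of the proof is a careful but essentially mechanical propagation using braid relations and centralizers.
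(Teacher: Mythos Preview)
Your overall architecture matches the paper's: build a large totally symmetric set inside $\B_n'$, classify its possible images in $\B_n$, then bootstrap via braid relations and centralizers. But several of the load-bearing steps in your sketch do not go through as written.

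First, your proposed totally symmetric set fails. The elements $y_i=\sigma_i\sigma_{i+1}^{-1}$ for odd $i$ do \emph{not} pairwise commute: already $\sigma_1\sigma_2^{-1}$ and $\sigma_3\sigma_4^{-1}$ fail since $\sigma_2$ and $\sigma_3$ do not commute. The paper instead uses $Z_n=\{\sigma_1^{p}z^{-1},\sigma_3^{p}z^{-1},\dots\}$ with $p=n(n-1)$; these lie in $\B_n'$ (signed word length zero), commute (the odd $\sigma_i$ commute and $z$ is central), and every permutation is achieved by an element of $\B_n'$ (if $g$ works, so does $g\sigma_1^k$ for the right $k$). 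Your remark about squared half-twists is not the mechanism.

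Second, and more seriously, you have no tool for the classification of large totally symmetric sets in $\B_n$. The paper's classification (its Lemma~\ref{lem:bnclass}) runs through canonical reduction systems and Nielsen--Thurston theory: one first shows that $\CRS(\rho(Z_n))$ is a totally symmetric \emph{labeled multicurve}, then classifies those combinatorially. Without this machinery you cannot rule out images whose canonical reduction system is empty (periodic or pseudo-Anosov; the paper uses McCarthy's description of pseudo-Anosov centralizers here) or images whose reduction system is nonempty but \emph{trivially labeled} (the paper needs an inductive argument via a cabling decomposition of the multicurve stabilizer). These degenerate cases are invisible in your outline and are genuinely nontrivial to exclude.

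Third, your ``parity/abelianization check'' will not force the exponent to be $\pm1$: the image lies in $\B_n'$, so abelianization sees nothing. The paper instead observes that a suitable conjugate produces two elements $\sigma_1^q$ and $H_b^q$ satisfying the braid relation, and then invokes the fact that $H_a^r$ and $H_b^s$ can satisfy a braid relation only when $r=s=\pm1$. This is the step that pins the power, and it is not an abelianization argument.

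Once these pieces are in place, your bootstrapping-by-centralizers idea is essentially what the paper does in its Steps~2--5 of Case~4, though the details (especially handling $\sigma_2$ and $\sigma_4$, which is where $n\ge 7$ is genuinely used) require more care than ``centralizer considerations'' suggests.
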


In his 1996 preprint, Vladimir Lin asks the following four questions about endomorphisms of $\B_n'$ \cite[0.9.2(b)--0.9.2(e)]{linpreprint}:
\begin{itemize}
\item Is every nontrivial endomorphism of $\B_n'$ injective?
\item Is every nontrivial endomorphism of $\B_n'$ equal to an automorphism of $\B_n'$?
\item Does every nontrivial endomorphism of $\B_n'$ extend to an endomorphism of $\B_n$? 
\item Does every nontrivial endomorphism of $\B_n'$ extend to an automorphism of $\B_n$?
\end{itemize}
The second and fourth questions also appear in the online problem list ``Open problems in combinatorial and geometric group theory'' \cite[Problems B5(b) and B7(b)]{shpilrain} and in the published version of the problem list \cite[Problems B6(b) and B8(b)]{baumslag}.  Theorem~\ref{thm:main} answers all four of these questions for $n \geq 7$.  Indeed, Theorem~\ref{thm:main} implies that every nontrivial endomorphism of $\B_n'$ extends to an automorphism of $\B_n$, immediately answering the fourth question.  The third question is hence answered because automorphisms are endomorphisms.  And since an automorphism of any group restricts to an automorphism of any characteristic subgroup, this answers the first two questions as well.  

\p{Prior results} In 2017, Orevkov \cite{orevkov} showed for $n \geq 4$ that $\Aut(\B_n') \cong \Aut(\B_n)$.  Another proof of Orevkov's result for $n \geq 7$ was given by McLeay \cite{mcleay}.  Theorem~\ref{thm:main} gives another proof of Orevkov's result for $n \geq 7$.  

In his 2004 paper, Lin \cite[Theorem A]{linbp} proved there are no nontrivial homomorphisms from $\B_n'$ to $\B_m$ when $n \geq 5$ and $m < n$.  Theorem~\ref{thm:main} also implies Lin's result for $n \geq 7$.  

In 1981, Dyer--Grossman \cite{dg} proved for $n \geq 3$ that $\Aut(\B_n) \cong \B_n/Z(\B_n) \rtimes \Z/2$, solving a problem of Artin \cite{artin} from 1947.  Then in 2016 Castel \cite{castel} classified for $n \geq 6$ all homomorphisms $\B_n \to \B_{n+1}$ (recently Chen and the authors \cite{chenkordekmargalit} generalized Castel's result by classifying all homomorphisms $\B_n \to \B_{2n}$ for $n \geq 5$).  Castel's theorem implies the theorem of Dyer--Grossman.  In Section~\ref{sec:castel} we explain how our Theorem~\ref{thm:main} gives a new proof of Castel's classification of endomorphisms of $\B_n$  for $n \geq 7$.  In particular, our work gives a new proof of the Dyer--Grossman result for $n \geq 7$.  

\p{New tool: totally symmetric sets} The main new tool we use to prove Theorem~\ref{thm:main} is the notion of a totally symmetric set, which we define in Section~\ref{sec:tss}.  Briefly, a totally symmetric set in a group $G$ is a subset $X$ of commuting elements with the property that each permutation of $X$ can be achieved by a single conjugation in $G$.  Recently, totally symmetric sets have also been used by Chudnovsky, Li, Partin, and the first author to give a lower bound for the cardinality of a finite non-abelian quotient of the braid group \cite{reu}.  

\p{Spaces of polynomials} Theorem~\ref{thm:main} has implications for spaces of polynomials.   Let $\textrm{Poly}_n$ denote the space of monic, square-free polynomials of degree $n$.  This is the same as the space of unordered configurations of $n$ distinct points in the plane (the $n$ points are the roots).  The fundamental group $\pi_1(\textrm{Poly}_n)$ is isomorphic to $\B_n$.

Similarly, let $\textrm{SPoly}_n$ denote the space of monic, square-free polynomials of degree $n$ and discriminant 1. The discriminant gives a map $\textrm{Poly}_n \to \mathbb{C} \setminus \{0\}$; this map is a fiber bundle with fiber $\textrm{SPoly}_n$.  Since $\mathbb{C} \setminus \{0\}$ is a $K(G,1)$ space it follows that $\pi_1(\textrm{SPoly}_n)$ embeds into $\pi_1(\textrm{Poly}_n)$ and the isomorphism from $\pi_1(\textrm{Poly}_n)$ to $\B_n$ induces an isomorphism from $\pi_1(\textrm{SPoly}_n)$ to $\B_n'$.  Because of these identifications, Theorem~\ref{thm:main} gives constraints on maps from $\textrm{SPoly}_n$ to $\textrm{Poly}_n$.

\p{Outline of the paper} In Section~\ref{sec:tss}, we introduce totally symmetric sets.  We also prove the following fundamental lemma: the image of a totally symmetric set under a homomorphism is either a totally symmetric set of the same cardinality or a singleton (Lemma~\ref{lem:tss}).  The section culminates with a classification of certain totally symmetric subsets of $\B_n$ (Lemma~\ref{lem:bnclass}).  In Section~\ref{sec:proof} we prove Theorem~\ref{thm:main} using the classification of totally symmetric sets and the fundamental lemma.  Finally, in Section~\ref{sec:castel}, we apply Theorem~\ref{thm:main} to prove the aforementioned special case of Castel's theorem.  

\p{Acknolwedgments} We are grateful to Lei Chen and Justin Lanier for helpful discussions about totally symmetric sets.  We are also grateful to Vladimir Shpilrain for pointing out the connection between our work and the questions of Vladimir Lin.

%%%
%%%
%%%

\section{Totally symmetric sets}\label{sec:tss}

In this section we introduce the main new technical tool in the paper, namely, totally symmetric sets.  After giving some examples, we derive some basic properties of totally symmetric sets, in particular developing the relationship with canonical reduction systems.  The main results in this section are the fundamental lemma for totally symmetric sets (Lemma~\ref{lem:tss}) and a classification of certain totally symmetric subsets of $\B_n$ (Lemma~\ref{lem:bnclass}).  

\p{Totally symmetric subsets of groups} Let $X$ be a subset of a group $G$.  We may conjugate $X$ by an element $g$ of $G$, meaning that we conjugate each element of $X$ by $g$.  We say that $X$ is \emph{totally symmetric} if
\begin{itemize}
\item the elements of $X$ commute pairwise and
\item each permutation of $X$ can be achieved via conjugation by an element of $G$.
\end{itemize}
As a first example, any singleton $\{x\}$ is totally symmetric.  Another example is the set of transpositions
\[
\{(1\ \ 2),(3\ \ 4),\dots,(m\ \ m+1)\}
\]
in the symmetric group $\Sigma_n$, where $m$ is an odd integer less than $n$.

We say that a totally symmetric set $X \subseteq G$ is totally symmetric with respect to a subgroup $H$ of $G$ if $X$ satisfies the above definition, with the additional constraint that the conjugating elements can be chosen to lie in $H$.  We observe that if $X \subseteq G$ is totally symmetric with respect to $H \leqslant G$, and $X \subseteq H$, then $X$ is a totally symmetric subset of $H$.  

The definition of a totally symmetric set is inspired by the work of Aramayona--Souto, who studied a particular example of a totally symmetric set in their work on homomorphisms between mapping class groups; see their paper \cite[Section 5]{AS}.  

\p{New totally symmetric sets from old} Let $X =\{x_1,\dots,x_m\}$ be a totally symmetric subset of $G$.  There are several ways of obtaining new totally symmetric sets from $X$.  Let $k,\ell \in \Z$ and let $z$ be an element of $G$ with the property that each permutation of $X$ can be achieved by an element of $G$ that commutes with $z$ (for example $z$ can lie in $Z(G)$).  Also, for each $i$ let $x_i^*$ denote the product of the $x_j \in X$ with $j \neq i$.  Starting from $X$, we may create the following totally symmetric sets: 
\begin{align*}
X^k &= \{ x_1^k ,\dots, x_m^k \} \\
X^* &= \{x_1^*,\dots,x_m^*\} \\
X^{k,\ell} &= \{x_1^k(x_1^*)^\ell,\dots,x_m^k(x_m^*)^\ell\} \\
X' &= \{x_1x_2^{-1},\dots,x_1x_m^{-1}\} \\
X^z &= \{x_1z,\dots,x_mz \}.
\end{align*}
We can combine these constructions, for instance $(X^k)^z$ and $(X^*)^*$ are totally symmetric.  Also, if all permutations of $X$ are achievable by elements of a subgroup $H$ of $G$, then the same is true for $X^k$, $X^*$, $X'$, and $X^z$.

\p{The fundamental lemma} We have the following fundamental fact about totally symmetric sets.  It is an analog of Schur's lemma from representation theory.  

\begin{lemma}
\label{lem:tss}
Let $X$ be a totally symmetric subset of a group $G$ and let $\rho : G \to H$ be a homomorphism of groups.  Then $\rho(X)$ is either a singleton or a totally symmetric set of cardinality $|X|$.
\end{lemma}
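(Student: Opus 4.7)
The plan is to write $y_i = \rho(x_i)$ for $i = 1, \dots, m$ and show that the $y_i$ are either all equal (so $\rho(X)$ is a singleton) or all distinct (so $\rho(X)$ is a totally symmetric set of cardinality $m$). First I would record two straightforward facts: since the $x_i$ commute pairwise and $\rho$ is a homomorphism, the $y_i$ commute pairwise; and if $g_\sigma \in G$ realizes a permutation $\sigma$ of $X$ by conjugation, then $\rho(g_\sigma)$ conjugates $y_i$ to $y_{\sigma(i)}$ for each $i$. So the tuple $(y_1, \dots, y_m)$ admits every permutation by conjugation in $H$; once we know the $y_i$ are distinct, this exhibits $\rho(X)$ as a totally symmetric subset of $H$ of cardinality $m$.

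The heart of the argument is the dichotomy. Suppose $y_i = y_j$ for some $i \ne j$; I want to conclude $y_k = y_i$ for every $k$. If $m = 2$ this is immediate. If $m \ge 3$, pick any index $k$ distinct from both $i$ and $j$ and let $\sigma$ be the transposition $(i\ k)$, which fixes $j$. The element $\rho(g_\sigma)$ then conjugates $y_i$ to $y_k$ and simultaneously fixes $y_j$. Since $y_i = y_j$, applying $\rho(g_\sigma)$ to both sides gives $y_k = y_j = y_i$. This shows every $y_k$ with $k \ne i, j$ equals $y_i$; combined with $y_i = y_j$, all the $y_k$ coincide, so $\rho(X)$ is a singleton. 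The only remaining case is that the $y_i$ are pairwise distinct, in which case the observations of the first paragraph complete the proof.

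I do not anticipate any serious obstacle here — the proof is essentially a one-line transposition argument once the setup is in place. The one subtlety worth being careful about is that total symmetry is a property of the \emph{set} $X$, not of an ordered tuple, so I need the $y_i$ to be distinct before I can invoke the totally symmetric set structure on $\rho(X)$; that is precisely what the dichotomy provides. It is also worth noting, for later use in the paper, that the same argument yields a relative version: if $X$ is totally symmetric with respect to a subgroup $K \leqslant G$, then in the non-collapsing case $\rho(X)$ is totally symmetric with respect to $\rho(K) \leqslant H$, since the conjugators $\rho(g_\sigma)$ lie in $\rho(K)$.
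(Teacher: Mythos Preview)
Your proof is correct and follows essentially the same approach as the paper's: both arguments propagate a single coincidence $\rho(x_i)=\rho(x_j)$ to all indices by choosing a permutation that fixes one of the two coinciding indices and carries the other to an arbitrary third index, then pushing that permutation through $\rho$. The paper phrases this as the computation $\rho(x_1x_i^{-1})=\rho(g)\rho(x_1x_2^{-1})\rho(g)^{-1}=1$, but the content is identical to your transposition argument.
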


\begin{proof}

It is clear from the definition of a totally symmetric set that $\rho(X)$ is totally symmetric and that its cardinality is at most $|X|$.  Suppose that the restriction of $\rho$ to $X$ is not injective; say $\rho(x_1)=\rho(x_2)$.  For any $x_i \in X$ there is (by the definition of a totally symmetric set) a $g \in G$ so that $(gx_1g^{-1},gx_2g^{-1}) = (x_1,x_i)$.  Thus
\[
\rho(x_1x_i^{-1}) = \rho((gx_1g^{-1})(gx_2^{-1}g^{-1})) = \rho(g)\rho(x_1x_2^{-1})\rho(g)^{-1} = 1.
\]
The lemma follows.
\end{proof}

\p{Totally symmetric sets in braid groups} In the braid group $\B_n$ the most basic example of a totally symmetric set is
\[
X_n = \{\sigma_1,\sigma_3,\sigma_5,\dots,\sigma_{m}\}
\]
where $m$ is the largest odd integer less than $n$.  As above, the sets $X_n^k$, $X_n^*$, $X_n'$, and $X^z$ are totally symmetric.  In the following lemma, let $z \in \B_n$ be a generator for the center $Z(B_n)$; the signed word length of $z$ is $n(n-1)$.  We also define
\begin{align*}
Y_n &= X_n', \text{ and} \\
Z_n &= \left(X_n^{n(n-1)}\right)^{z^{-1}}.
\end{align*}

\begin{lemma}
Let $n \geq 2$.  The set $X_n \subseteq \B_n$ is totally symmetric with respect to $\B_n'$.  In particular, $Y_n$ and $Z_n$ are totally symmetric subsets of $\B_n'$.  
\end{lemma}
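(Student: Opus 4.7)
The plan is to prove that $X_n\subseteq\B_n$ is totally symmetric with respect to $\B_n'$ directly, and then to deduce the assertions about $Y_n$ and $Z_n$ from the ``new totally symmetric sets from old'' constructions recorded above. The commuting condition for $X_n$ is immediate: any two of its indices are distinct odd integers and thus differ by at least $2$, so the far-commutation relation gives pairwise commutativity. For the permutation condition, I would reduce to adjacent transpositions, since transpositions generate the symmetric group and a product of permutation-realizing elements in $\B_n'$ again lies in $\B_n'$. Hence it suffices to realize each swap $\sigma_{2k-1}\leftrightarrow\sigma_{2k+1}$ by conjugation by a single element of $\B_n'$.

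For one such swap, the natural candidate is the positive half-twist braid $h_k\in\B_n$ on the four strands $\{2k-1,2k,2k+1,2k+2\}$. Inside the copy of $\B_4$ supported on these strands, the half-twist reverses the order of the three Artin generators, so $h_k$ swaps $\sigma_{2k-1}$ with $\sigma_{2k+1}$. The remaining elements of $X_n$ are the $\sigma_j$ with odd $j\leq 2k-3$ or $j\geq 2k+3$; each such $\sigma_j$ is supported on strands $\{j,j+1\}$ disjoint from the four strands of $h_k$, so $h_k$ commutes with it. The step I expect to be the main obstacle is that $h_k$ has positive exponent sum and thus does not itself lie in $\B_n'$. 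The idea to overcome this is that $\sigma_1\in X_n$ commutes pointwise with $X_n$, so conjugation by any power of $\sigma_1$ acts trivially on $X_n$; multiplying $h_k$ by $\sigma_1^{-N}$, where $N$ is the exponent sum of $h_k$, produces an element of $\B_n'$ realizing the same transposition of $X_n$.

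With $X_n$ totally symmetric with respect to $\B_n'$, both assertions about $Y_n$ and $Z_n$ follow from the listed constructions: the operation $X\mapsto X'$ preserves total symmetry with respect to a subgroup, and $X\mapsto X^z$ does so whenever $z$ is central, which applies to our full twist. Finally, an exponent-sum check confirms $Y_n,Z_n\subseteq\B_n'$: each element of $Y_n$ has the form $\sigma_1\sigma_i^{-1}$ with exponent sum zero, and each element of $Z_n$ has the form $\sigma_i^{n(n-1)}z^{-1}$ with exponent sum $n(n-1)-n(n-1)=0$. Hence $Y_n$ and $Z_n$ are totally symmetric subsets of $\B_n'$.
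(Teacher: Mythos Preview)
Your proof is correct and follows essentially the same approach as the paper. The paper's argument is terser---it simply takes for granted that any permutation of $X_n$ is realized by some $g\in\B_n$ and then corrects $g$ to lie in $\B_n'$ by multiplying by the appropriate power of $\sigma_1$---while you explicitly construct the conjugating elements in $\B_n$ via half-twists on four strands before applying the identical $\sigma_1$-correction trick; the deductions for $Y_n$ and $Z_n$ are the same in both.
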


\begin{proof}

Suppose some permutation $\tau$ of $X_n$ is achieved by $g \in \B_n$.  Since $\sigma_1$ commutes with each element of $X_n$, the permutation $\tau$ is also achieved by $g\sigma_1^k$ for all $k \in \Z$.  If we take $k$ to be the negative of the signed word length of $g$, then $g\sigma_1^k$ lies in $\B_n'$.  The first statement follows.  The second statement follows similarly, once we observe that each element of $Y_n$ and $Z_n$ lies in $\B_n'$.  
\end{proof}

The goal of the remainder of the section is to classify certain totally symmetric subsets of size $\lfloor n/2 \rfloor$ in $\B_n$.  The end result is Lemma~\ref{lem:bnclass} below.  The proof requires three auxiliary results, Lemmas~\ref{lem:multiclass}, \ref{lem:symcrs}, and~\ref{lem:matrix}.

\p{Totally symmetric multicurves}   The first tool is a topological version of total symmetry.  Let $Y$ be a set and let $S$ be a surface.  We say that a multicurve $M$ is \emph{$Y$-labeled} if each component of $M$ is labeled by a non-empty subset of $Y$.  The symmetric group $\Sigma_Y$ acts on the set of $Y$-labeled multicurves by acting on the labels.  The mapping class group $\Mod(S)$---the group of homotopy classes of orientation-preserving homeomorphisms of $S$ fixing the boundary of $S$---also acts on the set of $Y$-labeled multicurves via its action on the set of multicurves.  

Let $M$ be a $Y$-labeled multicurve in $S$.  We say that $M$ is \emph{totally symmetric} if for every $\sigma \in \Sigma_Y$ there is an $f \in \Mod(S)$ so that $\sigma \cdot M = f \cdot M$.  As in the case of totally symmetric sets, we say that $M$ is totally symmetric with respect a subgroup $H$ of $\Mod(S)$ if the elements $f$ from the definition can all be chosen to lie in $H$.

We say that a $Y$-labeled multicurve has the \emph{trivial labeling} if each component of the multicurve has the label $Y$ (recall that empty labels are not allowed).  Every such multicurve is totally symmetric (with respect to any subgroup $H$ of $\Mod(S)$).  We also say that a component of a $Y$-labeled multicurve has the trivial label if its label is $Y$.

We can describe a $Y$-labeled multicurve in a surface $S$ as a set of pairs $\{(d_i,A_i)\}$ where each $d_i$ is a curve in $S$, where each $A_i$ is a subset of $Y$, and where $\{d_i\}$ is a multicurve in $S$.

Let $Y$ be a set.  If $M=\{(d_1,A_1),\dots,(d_m,A_m)\}$ is a totally symmetric $Y$-labeled multicurve in a surface $S$, then we may create new totally symmetric multicurves from $Y$ as follows.  For a subset $A$ of $Y$, we denote by $A^c$ the complement $Y \setminus A$.  The new totally symmetric multicurve is
\begin{align*}
M^* &= \{(d_1,A_1^c),\dots,(d_m,A_m^c)\}
\end{align*}

Let $N = \lfloor n/2 \rfloor$, and let $[N]$ denote the set $\{1,\dots,N\}$.  For $1 \leq i \leq N$ let $c_i$ be the curve in $\D_n$ with the property that the half-twist about $c_i$ is $\sigma_{2i-1}$. The $[N]$-labeled multicurve
\[
M_n = \{(c_1,\{1\}),(c_2,\{2\}),\dots,(c_N,\{N\})\}
\]
is totally symmetric.  

For the statement of the next lemma, we require several further definitions.  First, for $H$ a subgroup of $\Mod(S)$, we say that two labeled multicurves in $S$ are $H$-equivalent if they lie in the same orbit under $H$.  

Next, let $c_0$ denote the standard  curve in $\D_n$ that surrounds the first $n-1$ marked points (so that $c_0$ is disjoint from $c_1,\dots,c_N$). The multicurve in $\D_n$ (with $n$ odd) whose components are $c_0,\ldots, c_N$ is depicted in Figure~\ref{fig:mc}.  For $n$ odd, let $\widehat M_n$ and $\widehat M_n^*$ be the labeled multicurves $\widehat M_n = M_n\cup\{c_0,[N]\}$ and $\widehat M_n^* = M_n^*\cup\{c_0,[N]\}$; these are depicted in Figure~\ref{fig:lmc}.

\begin{figure}
\labellist
\small\hair 2pt
\pinlabel $c_1$ at 57 110
\pinlabel $c_2$ at 114 110
\pinlabel $c_{N}$ at 224 110
\pinlabel $c_0$ at 140 245
\pinlabel $c_1$ at  410 110
\pinlabel $c_2$ at 460 110
\pinlabel $c_{N}$ at 570 110
\endlabellist
\includegraphics[scale=.5]{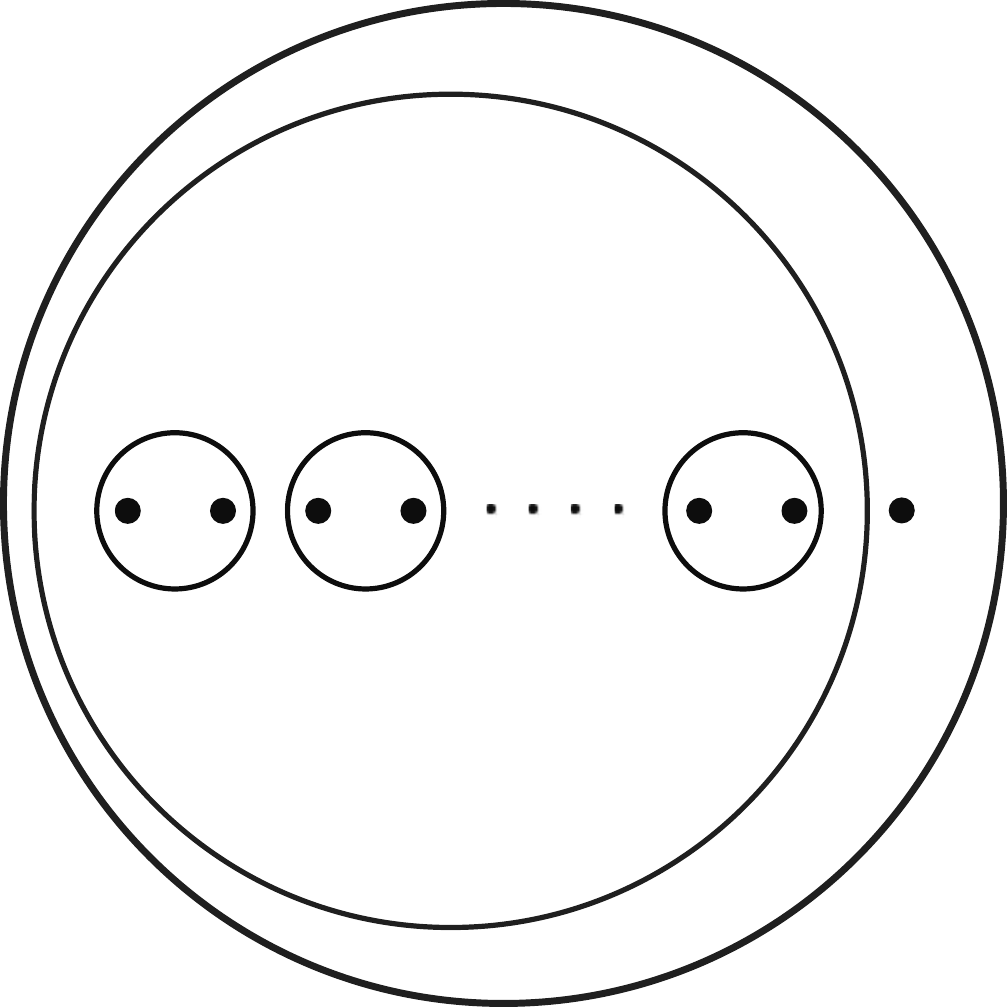}\quad\quad
\includegraphics[scale=.5]{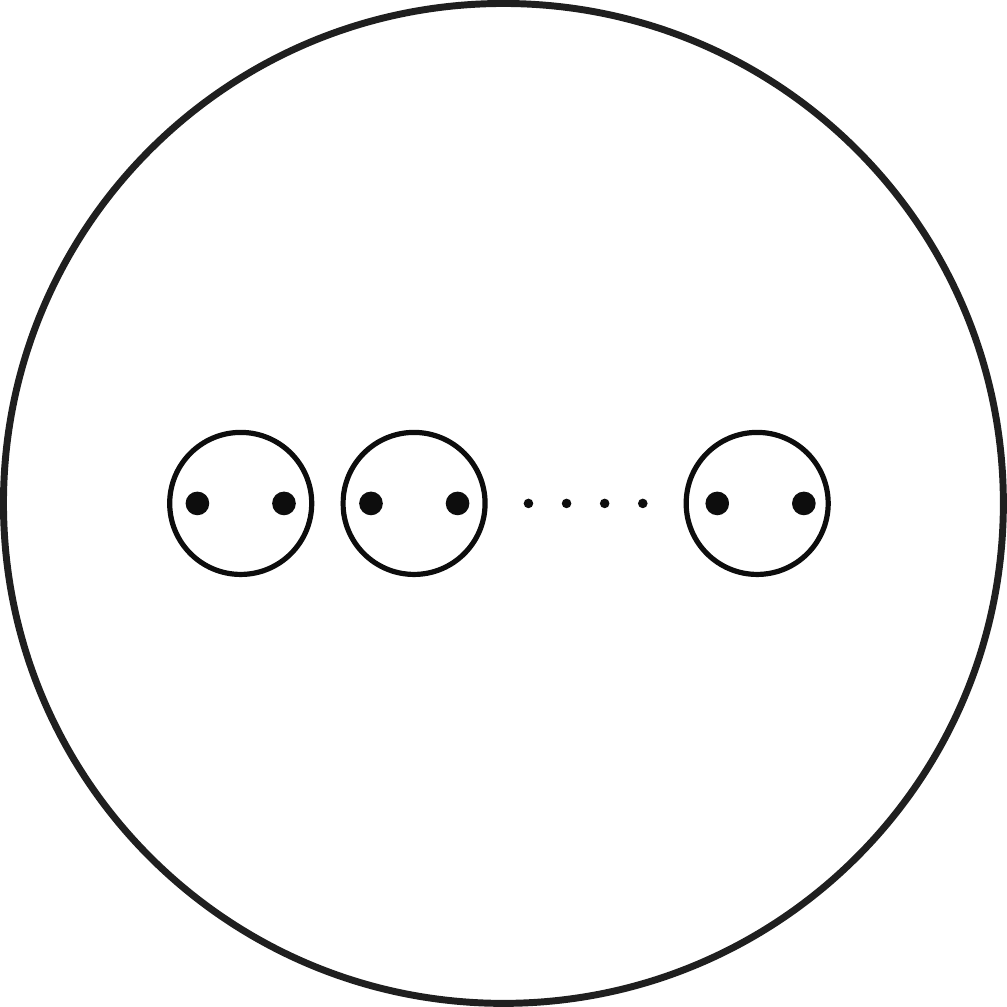}
\caption{\emph{Left:} the curves $c_0,\ldots, c_N$ in $\D_n$ with $n$ odd; \emph{Right:} the curves $c_1,\ldots, c_N$ in $\D_n$ with $n$ even}
\label{fig:mc}
\end{figure}

\begin{lemma}
\label{lem:multiclass}
Let $n \geq 1$, let $N = \lfloor n/2 \rfloor$.  
\begin{enumerate}
\item If $n$ is even, then every totally symmetric $[N]$-labeled multicurve in $\D_n$ with nontrivial labeling is $\B_n$-equivalent to $M_n$ or  $M_n^*$.
\item If $n$ is odd, then every totally symmetric $[N]$-labeled multicurve in $\D_n$ with nontrivial labeling is $\B_n$-equivalent to $M_n$, $M_n^*$, $\widehat M_n$, or  $\widehat M_n^*$.
\end{enumerate}
\end{lemma}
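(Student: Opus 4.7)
My plan is to determine the multiset of labels that can appear in $M$, then the underlying curves compatible with those labels, and finally to identify the whole labeled multicurve up to the $\B_n$-action. Write $M = \{(d_1,A_1),\ldots,(d_m,A_m)\}$. For each $\sigma \in \Sigma_N$ the total symmetry condition provides $f_\sigma \in \B_n \cong \Mod(\D_n)$ with $\sigma \cdot M = f_\sigma \cdot M$; in particular $f_\sigma$ permutes the underlying multicurve compatibly with the permutation of labels. Since $\Sigma_N$ acts transitively on the $k$-subsets of $[N]$, if some label has size $k$ then every $k$-subset of $[N]$ occurs as a label of the same number $c_k \geq 1$ of curves. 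Consequently $m = \sum_k c_k \binom{N}{k}$, and because a multicurve in $\D_n$ has at most $n-2$ components this is at most $2N-1$.

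The first main step is to show that every nontrivial label is a singleton or the complement of a singleton, i.e.\ $c_k = 0$ for $2 \leq k \leq N-2$. For $N \geq 5$, $\binom{N}{k} \geq \binom{N}{2} > 2N-1$ throughout that range, so the above bound forces $c_k = 0$. For $N = 4$ the same bound only gives $c_2 \leq 1$; if $c_2 = 1$, then $\Sigma_4$ acts transitively on the $6$ size-$2$-labeled curves and hence on the family of enclosed subsets of $[n]$. A homeomorphism of $\D_n$ preserves nesting, so this is an order-preserving transitive action of a finite group on a finite poset, and such an action must have antichain orbits (otherwise an element $S_1 < S_2$ together with some $\sigma$ satisfying $\sigma(S_1) = S_2$ produces an infinite ascending chain $S_2 < \sigma(S_2) < \sigma^2(S_2) < \cdots$). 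Laminarity then forces the $6$ subsets to be pairwise disjoint, but $6 \cdot 2 = 12$ exceeds $n \leq 9$, a contradiction. For $N \leq 3$ the range $[2, N-2]$ is empty. The same counting shows that $c_1$ and $c_{N-1}$ cannot both equal $1$, so by the involution $M \leftrightarrow M^*$ I may assume $c_1 = 1$ and $c_{N-1} = 0$, reducing to the $M_n$-family; the mirror case yields the $M_n^*$-family.

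With $c_1 = 1$ I have $N$ singleton-labeled curves $d_1,\ldots,d_N$. The antichain argument applied to the transitive $\Sigma_N$-action on these $N$ curves makes them pairwise disjoint, and transitivity makes each enclosed subset $P_i$ of the same size. The inequality $N|P_i| \leq n \leq 2N+1$ together with $|P_i| \geq 2$ (essentiality) gives $|P_i| = 2$, so the $P_i$ are disjoint pairs forming a (near-)perfect matching of $[n]$. Since $\B_n$ surjects onto $\Sigma_n$, a suitable braid maps this matching to the canonical one, and a further braid permuting the pairs aligns the labels, so the singleton-labeled components of $M$ are $\B_n$-equivalent to $M_n$. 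It remains to classify the trivially-labeled components. Laminarity with each $c_i$ forces any such curve to enclose a subset of the form $\bigcup_{i \in I} P_i$ or, when $n$ is odd, $\bigcup_{i \in I} P_i \cup \{u\}$, where $u$ is the unpaired point; moreover, the induced action of $f_\sigma$ on $[n]$ permutes the pairs according to $\sigma$ and fixes $u$. Any $\Sigma_N$-orbit with $1 \leq |I| \leq N-1$ either consists of curves coinciding with existing singleton-labeled components or contains two curves whose enclosed subsets overlap without being nested, so it cannot appear in a multicurve. Thus only $\Sigma_N$-invariant subsets survive, and the only essential such subset is $[2N]$, which is essential precisely when $n$ is odd and yields the curve $c_0$. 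Hence $M$ is $\B_n$-equivalent to $M_n$ for $n$ even and to $M_n$ or $\widehat M_n$ for $n$ odd; the mirror case produces $M_n^*$ and $\widehat M_n^*$.

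The main obstacle is the $N=4$ portion of the first step. The counting bound alone is too weak to eliminate size-$2$ labels, and one has to exploit the topology of the disk through the ``transitive action on a finite poset is an antichain'' argument; once identified, this mechanism also drives the subsequent analyses of the singleton- and trivially-labeled curves, so recognizing it early streamlines the whole proof.
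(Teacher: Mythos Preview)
Your proof is correct, but it takes a more circuitous route than the paper's. The key difference is in how un-nesting of the nontrivially-labeled curves is obtained. The paper observes directly that the curves carrying labels in a single $\Sigma_N$-orbit all lie in the same $\B_n$-orbit (being images of one another under the various $f_\sigma$), hence enclose the same number of marked points; since components of a multicurve are non-isotopic, this forces them to be un-nested, so their enclosed sets are pairwise disjoint. That immediately gives $\ell\cdot 2 \le n$ with $\ell = \binom{N}{|A|} \ge N$, forcing $\ell = N$, $|A|\in\{1,N-1\}$, and each curve to enclose exactly two points --- uniformly in $N$, with no case split. You instead start from the weaker bound $m \le n-2$, which leaves the $N=4$, $|A|=2$ case open and requires your separate antichain argument. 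The antichain principle is a pleasant general tool, and you do reuse it cleanly for the singleton-labeled and trivially-labeled parts, but the paper's ``same orbit $\Rightarrow$ same point count $\Rightarrow$ un-nested'' shortcut is what makes its version so brief.

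For the trivially-labeled components, the paper argues via the partition of $\{c_1,\dots,c_N\}$ induced by $T$: total symmetry forces this partition to be $\Sigma_N$-invariant, leaving only the possibility $T=\emptyset$ or (for $n$ odd) $T=\{c_0\}$. Your explicit orbit analysis of enclosed subsets reaches the same conclusion and is perhaps more transparent, at the cost of more casework. One small imprecision worth fixing: you describe the $f_\sigma$ as giving a group action, but $\sigma\mapsto f_\sigma$ need not be a homomorphism. Your actual ascending-chain argument only iterates a single $f_\sigma$, so the conclusion stands, but the phrasing should be adjusted.
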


\begin{proof}

Say that $M$ is a totally symmetric $[N]$-labeled multicurve in $\D_n$ with nontrivial label.  Let $c$ be a curve in $M$ with nontrivial label $\emptyset \neq A \subsetneq [N]$.  The symmetric group $\Sigma_N$ acts on the power set of $[N]$ and the orbit of $A$ under this action has $\ell \geq N$ elements.  Since $M$ is totally symmetric, there must be for each element $A'$ of this orbit a curve $d$ in $M$ so that: (1) $d$ lies in the same $\B_n$-orbit as $c$ and (2) the label for $d$ is $A'$.  In particular, $M$ contains distinct curves $d_1,\dots,d_\ell$ that all lie in the same $\B_n$-orbit.  It follows that $\ell=N$, that each $d_i$ surrounds exactly two marked points, and that the labels are either of the form $\{i\}$ or $\{i\}^c$.  We can further conclude that there are no other curves in $M$ with nontrivial label besides $d_1,\dots,d_N$.  Up to $\B_m$-equivalence, we may therefore assume that the labeled multicurve $\{d_1,\dots,d_N\}$ is exactly $M_n$ or $M_n^*$.  

Let $T = \{b_1,\dots,b_k\}$ be the set of curves of $M$ with trivial label.  The curves of $T$ induce a partition of the set $\{d_1,\dots,d_N\}$: the curves $d_i$ and $d_j$ are in the same subset of the partition if and only if they are not separated by an element of $T$.  Since the $b_i$ are essential and distinct from the $d_i = c_i$, it must be that either (1) $k=1$ and (up to $\B_m$-equivalence) $b_1=c_0$ or (2) the partition is nontrivial, meaning that it contains more than one subset.  The second possibility violates the assumption that $M$ is totally symmetric.  The lemma follows.
\end{proof}

\p{From totally symmetric sets to totally symmetric multicurves} Associated to each element $f$ of $\Mod(S)$ is its canonical reduction system $\CRS(f)$, which is a multicurve.  We will make use of several basic facts about canonical reduction systems.  First, we have $\CRS(f) = \emptyset$ if and only if $f$ is periodic or pseudo-Anosov.  Next, if $f$ and $g$ commute then $\CRS(f) \cap \CRS(g) = \emptyset$.  Also, for any $f$ and $g$ we have $\CRS(gfg^{-1}) = g\CRS(f)$.  See the paper by Birman--Lubotzky--McCarthy for background on canonical reduction systems \cite{blm}.

Given a totally symmetric subset $X = \{x_1,\dots,x_m\}$ of $\Mod(S)$ we obtain an $[m]$-labeled multicurve as follows: the underlying multicurve $M$ is obtained from the disjoint union of the $\CRS(x_i)$ by identifying homotopic curves, and the label of a curve $c \in M$ is the set of $i$ with $c$ a component of $\CRS(x_i)$.  We denote this $[m]$-labeled multicurve by $\CRS(X)$.  We have the following lemma, which follows immediately from the definitions and the stated facts about canonical reduction systems.

\begin{lemma}
\label{lem:symcrs}
If $X$ is a totally symmetric subset of $\Mod(S)$ then $\CRS(X)$ is totally symmetric.
\end{lemma}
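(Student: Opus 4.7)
The plan is to verify the definition of totally symmetric directly, using the total symmetry of $X$ together with the conjugation-equivariance property $\CRS(gfg^{-1}) = g \cdot \CRS(f)$. Given any permutation $\pi \in \Sigma_m$, I would first use the total symmetry of $X = \{x_1,\dots,x_m\}$ to produce $g \in \Mod(S)$ satisfying $gx_ig^{-1} = x_{\pi(i)}$ for each $i$, and then show that this same $g$ realizes $\pi$ (up to taking an inverse) on the $[m]$-labeled multicurve $\CRS(X)$.

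From the equivariance property, $g \cdot \CRS(x_i) = \CRS(x_{\pi(i)})$, so $g$ permutes the collection $\{\CRS(x_1),\dots,\CRS(x_m)\}$. In particular, $g$ preserves the underlying multicurve $M$ of $\CRS(X)$, which is the union of the $\CRS(x_i)$ modulo homotopy. The only piece of bookkeeping is the behavior of the labels. For a component $c$ of $M$ with label $A(c) = \{i : c \in \CRS(x_i)\}$, one has $g \cdot c \in \CRS(x_j)$ if and only if $c \in g^{-1}\CRS(x_j) = \CRS(x_{\pi^{-1}(j)})$, and a short unwinding gives $A(g\cdot c) = \pi(A(c))$. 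Comparing with the two relevant actions on labeled multicurves (the mapping class group moves curves without changing labels, and $\Sigma_m$ permutes labels without moving curves), this says that $g \cdot \CRS(X) = \pi^{-1} \cdot \CRS(X)$.

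Since $\Sigma_m$ is closed under inverses, allowing $\pi$ to range over all of $\Sigma_m$ shows that every permutation of the labels of $\CRS(X)$ is realized by some element of $\Mod(S)$, which is the desired conclusion. There is no genuine obstacle: the entire argument is a direct translation between the total symmetry of $X$ and that of $\CRS(X)$ via the equivariance of $\CRS$ under conjugation. The only mild point requiring care is keeping straight that the permutation realized on $\CRS(X)$ by $g$ is $\pi^{-1}$ rather than $\pi$, but this is immaterial for the totally symmetric property.
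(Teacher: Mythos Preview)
Your proposal is correct and is exactly the argument the paper has in mind: the paper does not give a written proof but states that the lemma ``follows immediately from the definitions and the stated facts about canonical reduction systems,'' meaning precisely the equivariance $\CRS(gfg^{-1}) = g\cdot\CRS(f)$ that you unwind. Your careful bookkeeping with the labels, including the harmless $\pi$ versus $\pi^{-1}$ discrepancy, is just the explicit version of that one-line justification.
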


The totally symmetric multicurves associated to $X_n$, $Y_n$, and $Z_n$ are
\begin{align*}
\CRS(X_n) &= \{(c_1,\{1\}),(c_2,\{2\}),\dots,(c_N,\{N\})\} = M_n \\
\CRS(Y_n) &= \{(c_1,[N])\} \cup \{(c_2,\{1\}),\dots,(c_N,\{N-1\}\}, \text{ and}\\
\CRS(Z_n) &= \{(c_1,\{1\}),\dots,(c_N,\{N\})\} = M_n.
\end{align*}

\begin{figure}
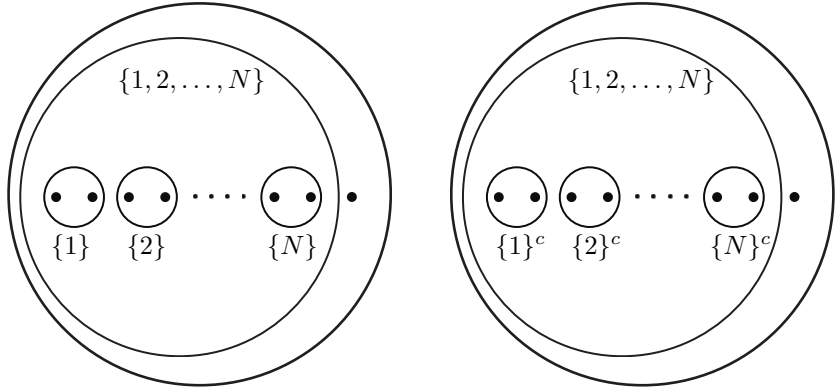

\labellist
\small\hair 1pt
\pinlabel $\{1\}$ at 48 105
\pinlabel $\{2\}$ at 105 105
\pinlabel $\{N\}$ at 215 105
\pinlabel $\{1,2,\dots,N\}$ at 139 230
\pinlabel $\{1\}^c$ at 387 105
\pinlabel $\{2\}^c$ at 444 105
\pinlabel $\{N\}^c$ at 553 105
\pinlabel $\{1,2,\dots,N\}$ at 478 230
\endlabellist
\includegraphics[scale=.5]{Mnc0}\quad\quad
\includegraphics[scale=.5]{Mnc0}
\caption{\emph{Left:} the labeled multicurve $\widehat M_n$; \emph{Right:} the labeled multicurve $\widehat M_n^*$}
\label{fig:lmc}
\end{figure}

\p{Classification of derived totally symmetric subsets}  Let $X = \{x_1,\dots,x_m\}$ be a totally symmetric subset of a group $G$.  In this case,  we say that a totally symmetric set $Y$ in $G$ is \emph{derived} from $X$ if $Y$ lies in the free abelian subgroup $\langle X \rangle$ of $G$.  We have already seen examples of derived totally symmetric sets, such as $X^k$, $X^*$, $X^{k,\ell}$, and $X'$.  

Let $X$ be a totally symmetric subset of a group $G$, and let $Y$ be a derived totally symmetric subset.  We consider the action of $G$ on itself by conjugation and write $\Stab_G(X)$ and $\Stab_G(Y)$ for the stabilizers of the sets $X$ and $Y$.    We say that the derived totally symmetric set $Y$ is \emph{robust} if
\[
\Stab_G(Y) \subseteq \Stab_G(X).
\]
As an example in $G = \B_n$, the totally symmetric set $Y = X_n^{k,\ell}$ is a robust totally symmetric set in $X_n$ as long as at least one of $k$ and $\ell$ is nonzero.  Indeed, since each $c_i$ lies in the canonical reduction system for some element of $X_n^{k,\ell}$, any element of $\Stab_G(X_n^{k,\ell})$ preserves the set of curves $\{c_1,\dots,c_N\}$ and hence lies in $\Stab_G(X_n)$.  

\begin{lemma}
\label{lem:matrix}
Let $X = \{x_1,\dots,x_m\}$ be a totally symmetric subset of a group $G$, and let $Y$ be a robust derived totally symmetric set with $m$ elements.  Then $Y$ is equal to some $X^{k,\ell}$.  
\end{lemma}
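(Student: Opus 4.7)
My plan is to translate the lemma into a statement about integer matrices, using that $\langle X \rangle$ is free abelian of rank $m$. Writing each $y_i$ as $y_i = \prod_{j=1}^m x_j^{a_{ij}}$ produces an $m \times m$ integer matrix $A = (a_{ij})$ with rows $R_1, \ldots, R_m$, and the assertion $Y = X^{k,\ell}$ becomes the claim that, after relabeling the $y_i$, $A$ equals $(k-\ell)I + \ell J$, i.e., has $k$ on the diagonal and $\ell$ off of it.

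I would first extract a matrix equivariance from the hypotheses. Given $\tau \in \Sigma_m$, total symmetry of $Y$ yields $g \in G$ with $g y_i g^{-1} = y_{\tau(i)}$; by robustness $g \in \Stab_G(X)$, so $g$ also permutes the $x_j$ by some $\sigma \in \Sigma_m$. Comparing exponents in $g y_i g^{-1} = y_{\tau(i)}$ gives $a_{\tau(i), \sigma(j)} = a_{ij}$, which in the coordinate action of $\Sigma_m$ on $\Z^m$ reads $R_{\tau(i)} = \sigma \cdot R_i$. In particular, all rows lie in the single orbit $\mathcal{O} = \Sigma_m \cdot R_1 \cong \Sigma_m/H$, where $H = \Stab_{\Sigma_m}(R_1)$, and they are distinct because $|Y| = m$.

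The heart of the proof is pinning down $H$ as a point stabilizer. Let $T = \{R_1, \ldots, R_m\} \subseteq \mathcal{O}$ and write $R_i = \sigma_i \cdot R_1$. The equivariance exhibits, for each $\tau \in \Sigma_T \cong \Sigma_m$, an element of the setwise stabilizer $\Stab_{\Sigma_m}(T)$ acting on $T$ as $\tau$. The map $\Stab_{\Sigma_m}(T) \to \Sigma_T$ is therefore surjective, with kernel $\bigcap_i \sigma_i H \sigma_i^{-1}$. Since $\Stab_{\Sigma_m}(T) \leq \Sigma_m$ has order at most $m!$, the surjection forces the kernel to be trivial and $\Stab_{\Sigma_m}(T) = \Sigma_m$. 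But $\Sigma_m$ is transitive on $\mathcal{O}$, so the invariant subset $T$ must equal $\mathcal{O}$; hence $[\Sigma_m : H] = m$. On the other hand, as the stabilizer of a vector in $\Z^m$ under coordinate permutation, $H$ is a Young subgroup of $\Sigma_m$, and the only Young subgroup of index $m$ is the point stabilizer, corresponding to the partition $(m-1,1)$. Consequently $R_1$ has one entry equal to some $k$ and $m-1$ entries equal to some $\ell \neq k$ (the case $m = 1$ being immediate). Since the other rows are column-permutations of $R_1$ and the $m$ distinct rows must realize all $m$ possible positions for the entry $k$, relabeling the $y_i$ arranges row $R_i$ to have $k$ at position $i$, giving $y_i = x_i^k(x_i^*)^\ell$ and hence $Y = X^{k,\ell}$.

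The main obstacle is the counting step forcing $T = \mathcal{O}$: this is precisely where robustness and total symmetry combine to blow the row subset up to the full column-permutation orbit, pinning down $|H| = (m-1)!$. Once the index of $H$ is fixed, the Young-subgroup description of vector stabilizers delivers the explicit form of $Y$ essentially for free.
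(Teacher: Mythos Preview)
Your proof is correct and follows the same overall strategy as the paper: encode $Y$ in an $m\times m$ matrix $A$ over $\langle X\rangle\cong\Z^m$, extract from total symmetry plus robustness an equivariance between row permutations and column permutations, and then use a counting argument to force the $(k,\ell,\dots,\ell)$ shape.

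The difference is in how the counting step is phrased. The paper works with the \emph{columns}: it shows that every coordinate-permutation of a column is again a column, picks a column $v$ with two distinct entries, and observes that if $v$ had any shape other than one value repeated $m-1$ times, its $\Sigma_m$-orbit under entry-permutation would have more than $m$ elements, contradicting that there are only $m$ columns. You work with the \emph{rows}: you show that the $m$ distinct rows form a $\Sigma_m$-invariant subset of a single column-permutation orbit, force that orbit to have size exactly $m$ via the surjection $\Stab_{\Sigma_m}(T)\twoheadrightarrow\Sigma_T$, and then identify the stabilizer $H$ as a Young subgroup of index $m$, hence a point stabilizer. These are dual phrasings of the same combinatorial fact (a vector in $\Z^m$ whose $\Sigma_m$-orbit has size $m$ must have entries of type $(m-1,1)$). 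The paper's version is marginally more elementary, while your orbit--stabilizer/Young-subgroup formulation makes the structure more explicit and would adapt more readily to variants of the lemma.
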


\begin{proof}

We may assume that $m \geq 2$, for otherwise the lemma is trivial.  Say that the elements of $Y$ are $y_1,\dots,y_m$ and that the elements of $X$ have order $d$.  Then the elements of $Y$ can be written as
\[
y_i = x_1^{a_{i,1}} \cdots x_m^{a_{i,m}},
\] 
where each $a_{i,j}$ lies in $\Z/d\Z$ (when $d = \infty$ we interpret $\Z/d\Z$ as $\Z$).  Let $A$ be the $m\times m$ matrix $\left(a_{i,j}\right)$.  As such, the $i$th row of $A$ records the exponents on the $x_j$ in the expression for $y_i$. 

The statement of the lemma is equivalent to the statement that there exist $k,\ell \in \Z/d\Z$ such that, up to reordering the rows of $A$, we have

\medskip

\[
 A = 
 \begin{pmatrix}
 k & \ell & \cdots & \ell \\
 \ell & k  & \cdots & \ell \\
 \vdots & \vdots & \ddots &\vdots\\
 \ell & \ell & \cdots & k
 \end{pmatrix}
\]

\medskip

We claim that any permutation of the rows of $A$ is achieved by a permutation of the columns of $A$. Let $\sigma\in \Sigma_m$ and let $g\in G$ be such that $gy_ig^{-1} = y_{\sigma(i)}$. This determines a permutation of the rows of $A$.  By the total symmetry of $Y$, every permutation of the rows arises in this way. On the other hand, since $Y$ is robust, the conjugating element $g$ also permutes $X$, and hence determines a permutation of the columns of $A$. As both permutations have the same effect on the set $Y$, the claim follows.

We next claim that if $v$ is a column of $A$, and $w$ is an element of $(\Z/d\Z)^m$ obtained by permuting the entries of $v$, then $w$ is also a column of $A$.  The claim follows from the previous claim and the fact that any permutation of the entries of $v$ can be achieved by a permutation of the rows of $A$.

It must be that some column of $A$ has at least two distinct entries; if not, then the rows of $A$ are equal, violating that assumption that the $y_i$ are distinct.  Let $v$ be such a column of $A$.   It must be that, up to reordering the rows of $A$, we have $v = (k,\ell,\dots,\ell)$ for some $k$ and $\ell$.  Indeed, otherwise, there would be more than $m$ distinct permutations of the entries of $v$, violating the previous claim.  It further follows from the previous claim that the $m$ columns of $A$ are the $m$ distinct permutations of the entries of $v$.  After reordering the rows, $A$ has the desired form.  
\end{proof}

\p{Classification of totally symmetric subsets of the braid group} In the following lemma, we say that two totally symmetric subsets of a group $G$ are \emph{$G$-equivalent} if there is an automorphism of $G$ taking one to the other.  

\begin{lemma}
\label{lem:bnclass}
Let $n \geq 5$, let $N = \lfloor n/2 \rfloor$, and let $X=\{x_1,\dots,x_N\}$ be a totally symmetric subset of $\B_n$.  Assume that $\CRS(X)$ has a nontrivial labeling.
\begin{enumerate}[itemsep=.5em]
\item If $n$ is even, $X$ is $\B_n$-equivalent to $(X_n^\ell)^{z^s}$ or $((X_n^*)^\ell)^{z^s}$ for some $k,s \in \Z$ with $\ell \neq 0$.  
\item If $n$ is odd, $X$ is $\B_n$-equivalent to $(X_n^\ell)^{T_{c_0}^rz^s}$ or $((X_n^*)^\ell)^{T_{c_0}^rz^s}$ for some $\ell,r,s \in \Z$ with $\ell \neq 0$. 
\end{enumerate}
\end{lemma}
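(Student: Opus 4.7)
The plan is to combine the three preceding lemmas of this section. By Lemma~\ref{lem:symcrs}, $\CRS(X)$ is a totally symmetric $[N]$-labeled multicurve, and by hypothesis it carries a nontrivial labeling. Lemma~\ref{lem:multiclass} then applies: after conjugating $X$ by a suitable element of $\B_n$ (which effects a $\B_n$-equivalence of totally symmetric sets), we may assume $\CRS(X)$ equals one of the standard multicurves $M_n$ or $M_n^*$ (in the even case) or $M_n$, $M_n^*$, $\widehat M_n$, $\widehat M_n^*$ (in the odd case).

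The next step is to show that each $x_i$ lies in a specific abelian subgroup of $\B_n$: namely, $A = \langle \sigma_1, \sigma_3, \ldots, \sigma_{2N-1}, z \rangle$ when $\CRS(X) \in \{M_n, M_n^*\}$, and $A' = \langle \sigma_1, \ldots, \sigma_{2N-1}, T_{c_0}, z \rangle$ when $\CRS(X) \in \{\widehat M_n, \widehat M_n^*\}$. The key observation is that $\CRS(x_i) \subseteq \CRS(X)$, so each $x_i$ preserves every curve of $\CRS(X)$, and therefore belongs to the pure stabilizer of $\CRS(X)$ in $\B_n$. A Nielsen--Thurston analysis of the restriction of $x_i$ to each complementary subsurface of $\CRS(X)$ in $\D_n$, together with the observation that any pseudo-Anosov restriction would fail to commute with the nontrivially reducible restrictions of the other $x_j$, forces each such restriction to be periodic. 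Describing the centralizers of the $x_j|$ then pins down each periodic restriction as a central power, yielding $x_i \in A$ (or $A'$).

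With $X$ sitting inside an abelian subgroup, uniquely decompose each $x_i = u_i \cdot z^{b_i}$ (or $u_i \cdot T_{c_0}^{r_i} \cdot z^{b_i}$ in the hatted cases), with $u_i \in \langle X_n \rangle$. Since conjugation by any element of $\B_n$ fixes $z$, and since any conjugating element realizing a permutation of $X$ must preserve $\CRS(X)$ and hence the distinguished curve $c_0$, the total symmetry of $X$ forces all $b_i$ equal to a common integer $s$ and all $r_i$ equal to a common integer $r$. The set $Y = \{u_1, \ldots, u_N\}$ is then a totally symmetric derived subset of $X_n$ of the same cardinality, and the nontrivial labeling of $\CRS(X)$ is inherited by $\CRS(Y)$. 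Arguing exactly as in the example following the definition of robustness, $Y$ is robust. Lemma~\ref{lem:matrix} then produces $Y = X_n^{k,\ell}$ for some integers $k, \ell$. A direct computation of $\CRS(X_n^{k,\ell})$ shows that whenever both $k$ and $\ell$ are nonzero, every curve of $\CRS(Y)$ carries the trivial label $[N]$; nontriviality of labeling therefore forces $k = 0$ or $\ell = 0$, so $Y = X_n^\ell$ or $Y = (X_n^*)^\ell$ with $\ell \neq 0$. Multiplying by $z^s$ (and $T_{c_0}^r$ in the odd hatted case) gives the stated forms.

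The main obstacle is the second paragraph: making rigorous the claim that each $x_i$ lies in $A$ (or $A'$). This requires a careful analysis of the restrictions of $x_i$ to the complementary subsurfaces of $\CRS(X)$ in $\D_n$, together with a precise description of the centralizers of periodic mapping classes on multiply-punctured disks with annular boundaries, ensuring that the only contributions outside the neighborhood of $c_i$ are central ones (and a power of $T_{c_0}$ in the odd case).
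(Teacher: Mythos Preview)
Your overall strategy matches the paper's: invoke Lemma~\ref{lem:symcrs}, then Lemma~\ref{lem:multiclass}, then reduce to Lemma~\ref{lem:matrix}.  One organizational difference is that the paper only uses Lemma~\ref{lem:matrix} for the starred cases $M_n^*$ and $\widehat M_n^*$; in the $M_n$ and $\widehat M_n$ cases it argues directly that $x_1 = H_{c_1}^\ell T_{c_0}^r z^s$ (with $r=0$ when $n$ is even) and then transports this to all $x_i$ by total symmetry.  Your uniform route through Lemma~\ref{lem:matrix} is a legitimate alternative and is a bit tidier.

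There is, however, a genuine gap in your Nielsen--Thurston step, and it is exactly the point you flag as the main obstacle.  You cut along $\CRS(X)$ and then assert that a pseudo-Anosov restriction of $x_i$ would fail to commute with ``the nontrivially reducible restrictions of the other $x_j$.''  But once you have cut along \emph{all} of $\CRS(X)$, every restriction of every $x_j$ to a complementary piece is irreducible---there is nothing reducible left to contradict.  The paper's remedy is to cut along $\CRS(x_i)$ rather than $\CRS(X)$.  In the $\widehat M_n$ case, for example, $\CRS(x_1)=\{c_0,c_1\}$, so the curves $c_2,\dots,c_N$ survive as essential curves in the middle region.  Since $x_1$ commutes with $x_2$ it fixes $\CRS(x_2)\ni c_2$, and a pseudo-Anosov component cannot fix an essential curve; hence that component is periodic.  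The paper then finishes this off concretely: collapsing $c_0$ and $c_1$ to marked points turns the middle region into a punctured sphere on which the periodic restriction is a rotation about the two distinguished points, and a rotation fixing the curve $c_2$ (which surrounds two of the remaining punctures) must be trivial.  Your phrase ``describing the centralizers of the $x_j|$ then pins down each periodic restriction as a central power'' is gesturing at this, but as written it does not supply the argument.

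One smaller point: the implication ``$\CRS(x_i)\subseteq\CRS(X)$, so each $x_i$ preserves every curve of $\CRS(X)$'' is not immediate from the containment.  What you need is that $x_i$ commutes with each $x_j$ and hence preserves $\CRS(x_j)$ setwise; a short further check (intersecting the various $\CRS(x_j)$) is then required to see that $x_i$ fixes each $c_k$ individually rather than permuting them.
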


\begin{proof}[Proof of Lemma~\ref{lem:bnclass}]

We treat the case where $n$ is odd.  The other case is similar (and simpler).  By Lemma~\ref{lem:symcrs}, the multicurve $\CRS(X)$ is totally symmetric.  It then follows from Lemma~\ref{lem:multiclass} that $\CRS(X)$ is $\B_n$-equivalent to $M_n$, $M_n^*$, $\widehat M_n$, or  $\widehat M_n^*$.  We may assume then that $\CRS(X)$ is in fact equal to $M_n$, $M_n^*$, $\widehat M_n$, or  $\widehat M_n^*$.  We discuss the cases $\widehat M_n$ and  $\widehat M_n^*$ in turn, the other cases again being similar and simpler.

Suppose first that $\CRS(X)$ is equal to $\widehat M_n$.  This means that $\CRS(x_1)$ is equal to $\{c_0,c_1\}$.  The multicurve $\CRS(x_1)$ divides $\D_n$ into three regions, each corresponding to a periodic or pseudo-Anosov Nielsen--Thurston component of $x_1$.  To prove the lemma in this case it suffices to show that the outer two Nielsen--Thurston components of $x_1$ are trivial.  Indeed, it follows from this that $x_1$ is of the desired form $H_{c_1}^\ell T_{c_0}^r z^s$ with $\ell \neq 0$, and then by the total symmetry that each $x_i$ is of the desired form $H_{c_i}^\ell T_{c_0}^r z^s$.

The outermost Nielsen--Thurston component of $x_1$ (exterior to $c_0$) is necessarily trivial since this outermost region is a pair of pants (after collapsing the boundary components to points), and since $x_1$ fixes the three marked points.

It remains to show that the Nielsen--Thurston component of $x_1$ corresponding to the region lying between $c_0$ and $c_1$ is trivial.  Since $x_1$ commutes with $x_2$, it follows that $x_1$ fixes $\CRS(x_2) = \{c_2\}$.  From this, it immediately follows that the Nielsen--Thurston component in question is not pseudo-Anosov.  Therefore it is periodic.  If we collapse $c_0$ and $c_1$ to marked points, the region lying between $c_0$ and $c_1$ becomes a sphere with $n-1$ marked points.  A periodic mapping class of this sphere is a rotation fixing the marked points coming from $c_0$ and $c_1$.  Since $x_1$ fixes $c_2$ (which surrounds two marked points) it follows that the rotation is trivial, completing the proof of the lemma in this case.

We now address the case where $\CRS(X)$ is equal to $\widehat M_n^*$.  In this case, it follows as in the previous case that each $x_i$ is of the form
\[
x_i = P_i T_{c_0}^rz^s
\]
where each $P_i$ is a product of nonzero powers of half-twists about the elements of $\{c_1,\dots,c_N\} \setminus \{c_i\}$ (it follows from the total symmetry that $r$ and $s$ are independent of $i$).  Let $Y$ be the totally symmetric set $Y=X^{T_{c_0}^{-r}z^{-s}}$.  This $Y$ has $N$ elements and is a robust totally symmetric set derived from $X_n$  (the argument is the same as the above argument that $X_n^{k,\ell}$ is robust in $X_n$).  By Lemma~\ref{lem:matrix}, $Y$ is equal to $X^{k,\ell}$ for some $k, \ell \in \Z$.  It then follows from the fact that $\CRS(X) = \widehat M_n^*$ that $k=0$ and $\ell \neq 0$.  This implies that $Y = (X_n^*)^\ell$.  It follows that
\[
X = Y^{T_{c_0}^rz^s} = ((X_n^*)^\ell)^{T_{c_0}^rz^s},
\]
as desired.
\end{proof}

%%%
%%%
%%%

\section{Proof of Theorem~\ref{thm:main}}
\label{sec:proof}

In this section, we prove Theorem~\ref{thm:main}, which states that, for $n \geq 7$, every homomorphism $\rho : \B_n' \to B_n$ is equivalent to the inclusion map.  We require the following tool, a direct product decomposition for certain subgroups of $\B_n$.  

\p{The cabling decomposition} Let $M$ be a multicurve in $\D_n$.  Our next goal is to give an iterated semi-direct product decomposition for $\Stab_{\B_n}(M)$, the stabilizer of $M$ in $\B_n$.   The desired decomposition is given in Lemma~\ref{lemma:cabling} below.

We begin with some setup.  Let $\Gamma$ be the graph defined as follows: there is a distinguished vertex corresponding to the boundary of $\D_n$ and there are other vertices corresponding to the components of $M$; the edges correspond to vertices that are adjacent in the sense that there is a path in $\D_n$ from one curve to the other that does not intersect any other component of $M$.  

Since $\D_n$ is a planar surface, $\Gamma$ is a tree, and we may think of it as a rooted tree with the distinguished vertex as the root.  For $i \geq 1$ let $M_i$ be the multicurve consisting of components of $M$ that have distance $i$ from the root in $\Gamma$, and let $M_0$ denote $\partial \D_n$.  The multicurve $M_1$ consists of the outermost components of $M$.

The group $\Stab_{\B_n}(M)$ acts on the rooted tree $\Gamma$ by simplicial automorphisms.  In particular, it fixes $M_0$ and acts on each $M_i$.  A key property of each $M_i$ is that it is an un-nested multicurve, that is, no component is contained in the disk bounded by another component.  

Let $\Delta_0$ be the disk with marked points obtained from $\D_n$ by crushing to a marked point each disk bounded by a component of $M_0$.  We label each of the marked points by the number of marked points in the interior of the corresponding component of $M_0$.  Some of the marked points of $\Delta_0$ correspond directly to the marked points of $\D_n$ that lie in the exterior of $M_0$; we give those marked points the label 1.  Let $\B_{\Delta_0}$ denote the subgroup of the mapping class group of $\Delta_0$ that preserves the labels. If $\Delta_0$ has $\ell$ marked points, then $\B_{\Delta_0}$ is the subgroup of $\B_\ell$ that preserves the labels of the strands.

Say that $M_0$ has $q$ components, which surround $n_1,\dots,n_q$ marked points, respectively.  There is a map 
\[
\Pi : \Stab_{\B_n}(M_0) \to \B_{\Delta_0}  
\]
obtained by crushing the disks bounded by the components of $M_0$ to marked points.  Since $M_0$ is un-nested, the kernel of $\Pi$ is isomorphic to the direct product  
\[
\B_{n_1} \times \cdots \times \B_{n_q}.
\]
The map $\Pi$ is split (the argument is the same as the one used in the paper by Lei Chen and the authors of this paper \cite[Lemma 6.2]{chenkordekmargalit}).  We thus have a semi-direct product
\[
\Stab_{\B_n}(M_0) \cong \B_{\Delta_0} \ltimes \left( \B_{n_1} \times \cdots \times \B_{n_q} \right).
\]
For each $j$, let $M_{0,j}$ be the multicurve in $\D_{n_j}$ corresponding the children in $\Gamma$ of the $j$th component of $M_0$. It follows from this semi-direct product decomposition of $\Stab_{\B_n}(M_0)$ that we have the following semi-direct product decomposition for $\Stab_{\B_n}(M)$.

\begin{lemma}
\label{lemma:cabling}
Let $M$ be a multicurve in $\D_n$.  Let $M_0$ be the multicurve in $\D_n$ consisting of the outermost components of $M$, and let $M_0$ and $M_{0,1},\dots,M_{0,q}$ be the multicurves in $\D_{n_1},\dots,\D_{n_q}$ defined above.  The map $\Pi$ induces a semi-direct product decomposition
\[
\Stab_{\B_n}(M) \cong \B_{\Delta_0} \ltimes \left( \Stab_{\B_{n_1}}(M_{0,1}) \times \cdots \times \Stab_{\B_{n_q}}(M_{0,q}) \right).
\]
\end{lemma}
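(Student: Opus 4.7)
The plan is to derive this decomposition by restricting the already-established semi-direct product for $\Stab_{\B_n}(M_0)$ to the subgroup $\Stab_{\B_n}(M)$. Since $M_0$ consists of the outermost components of $M$, any homeomorphism preserving $M$ must in particular preserve $M_0$, so $\Stab_{\B_n}(M) \leqslant \Stab_{\B_n}(M_0)$. Using the isomorphism
\[
\Stab_{\B_n}(M_0) \cong \B_{\Delta_0} \ltimes (\B_{n_1} \times \cdots \times \B_{n_q})
\]
from the setup, I would write a typical element as $\phi \cdot (f_1,\dots,f_q)$ and ask when it lies in $\Stab_{\B_n}(M)$.

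The analysis splits naturally into two parts. Because $M_0$ is already preserved, membership in $\Stab_{\B_n}(M)$ reduces to preserving $M \setminus M_0$, which pulls back to the disjoint union of the multicurves $M_{0,j}$ inside the respective disks $\D_{n_j}$ obtained by crushing the components of $M_0$. The tuple $(f_1,\dots,f_q)$ acts component-wise on this decomposition, so it preserves $M\setminus M_0$ precisely when each $f_j \in \Stab_{\B_{n_j}}(M_{0,j})$. For the $\B_{\Delta_0}$ part, I would choose the splitting $\B_{\Delta_0} \hookrightarrow \Stab_{\B_n}(M_0)$ so that each lift acts by a homeomorphism carrying each component of $M_0$ onto its image via an identification that takes the inner multicurve $M_{0,j}$ onto $M_{0,\phi(j)}$; this is a mild refinement of the section constructed in Chen--Kordek--Margalit Lemma~6.2. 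With this choice, the image of the splitting lies inside $\Stab_{\B_n}(M)$.

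With a splitting of this form in hand, the restriction of $\Pi$ to $\Stab_{\B_n}(M)$ remains surjective onto $\B_{\Delta_0}$, and its kernel is the direct product $\Stab_{\B_{n_1}}(M_{0,1}) \times \cdots \times \Stab_{\B_{n_q}}(M_{0,q})$; combining these two pieces yields the desired semi-direct product. The main obstacle is verifying that the splitting can indeed be chosen to factor through $\Stab_{\B_n}(M)$: one has to pick compatible homeomorphic identifications between the disk-with-multicurve pairs $(\D_{n_j}, M_{0,j})$ corresponding to marked points of $\Delta_0$ that are permuted by $\B_{\Delta_0}$. This is a careful but essentially routine refinement of the construction already used for $\Stab_{\B_n}(M_0)$, using that the label-preservation built into $\B_{\Delta_0}$ matches up marked points whose corresponding subtrees of $\Gamma$ carry identifiable decorated multicurves.
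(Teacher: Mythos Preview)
Your approach is exactly what the paper does: the paper gives no detailed argument at all, simply asserting that the decomposition for $\Stab_{\B_n}(M)$ ``follows from'' the already-established semi-direct product for $\Stab_{\B_n}(M_0)$, which is precisely the restriction argument you outline.

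One genuine point deserves more care than your final sentence gives it.  You claim that the section $\B_{\Delta_0}\hookrightarrow \Stab_{\B_n}(M_0)$ can be refined to land in $\Stab_{\B_n}(M)$ because ``label-preservation built into $\B_{\Delta_0}$ matches up marked points whose corresponding subtrees of $\Gamma$ carry identifiable decorated multicurves.''  But as the paper defines it, the label of a marked point of $\Delta_0$ records only the \emph{number} of punctures inside the corresponding component of $M_0$, not the isomorphism type of the pair $(\D_{n_j},M_{0,j})$.  So two components of $M_0$ can share a label yet bound non-homeomorphic inner configurations, in which case the element of $\B_{\Delta_0}$ swapping them has no lift to $\Stab_{\B_n}(M)$, and $\Pi|_{\Stab_{\B_n}(M)}$ fails to surject onto $\B_{\Delta_0}$.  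The fix is either to refine the labels on $\Delta_0$ to record the full homeomorphism type of the pair $(\D_{n_j},M_{0,j})$, or to replace $\B_{\Delta_0}$ in the statement by the actual image of $\Pi|_{\Stab_{\B_n}(M)}$.  Either way the argument goes through, and for the application in Case~3 of the main theorem only the kernel computation is used, so this imprecision is harmless there---but your sentence as written asserts something that is not in general true.
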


We remark that Lemma~\ref{lemma:cabling} can be iteratively applied in order to decompose each of the $\Stab_{\B_{n_j}}(M_{0,j})$, giving an iterated semi-direct product decomposition of $\Stab_{\B_n}(M)$.  In the final decomposition, each factor is a subgroup of some braid group, specifically, a subgroup preserving some partition of the strands.

\begin{proof}[Proof of Theorem~\ref{thm:main}]

As in the statement, assume $n \geq 7$ and let $\rho : \B_n' \to \B_n$ be a nontrivial homomorphism.  Let $Z_n$ be the totally symmetric subset of $\B_n'$ defined in Section~\ref{sec:tss}, and let $M$ denote the labeled multicurve $\CRS(\rho(Z_n))$.  For $1 \leq i \leq \lfloor n/2 \rfloor$ let $x_i$ be the element
\[
x_i = \rho(\sigma_i^{p}z^{-1}),
\]
so that $\rho(Z_n)$ is the set of all $x_i$ (they may not be all distinct).  

By Lemma~\ref{lem:tss}, the cardinality of $\rho(Z_n)$ is either 1 or $\lfloor n/2 \rfloor$.  We thus have four cases:
\begin{enumerate}
\item $\rho(Z_n)$ is a singleton,
\item $M$ is empty,
\item $M$ is non-empty and is trivially labeled, and
\item $M$ is non-empty and is not trivially labeled.
\end{enumerate}
In the first two cases we will show that $\rho$ is trivial, in the third case we will derive a contradiction, and in the last case we will show that $\rho$ is equivalent to the inclusion map.

\bigskip

\noindent \emph{Case 1.}  If $\rho(Z_n)$ is a singleton then $x_1=x_3$ and so
\[
\rho(\sigma_1\sigma_3^{-1})^{p} = \rho((\sigma_1\sigma_3^{-1})^{p}) = \rho((\sigma_1^{p}z^{-1})(\sigma_3^{p}z^{-1})^{-1}) = x_1x_3^{-1} = 1.
\]
Since $\B_n$ is torsion-free, we therefore have that $\rho(\sigma_1\sigma_3^{-1}) = 1$, and since the normal closure of $\sigma_1\sigma_3^{-1}$ in $\B_n'$ is equal to $\B_n'$ for $n \geq 5$ (see~\cite[Lemma 8.4]{chenkordekmargalit}) it follows that $\rho$ is trivial.  

\bigskip

\noindent \emph{Case 2.}  In this case we will prove that $\rho$ is trivial. To say that $M$ is empty is to say that the $x_i$ are periodic or they are pseudo-Anosov.  

Assume first that the $x_i$ are periodic.  Since the image of $\rho$ is contained in $\B_n'$ and since the only periodic element of $\B_n'$ is the identity, we must have that $x_i = 1$ for all $i$. This implies that $\rho(Z_n)$ is a singleton, in which case we can apply Case 1 in order to conclude that $\rho$ is trivial. 

Now assume that the $x_i$ are all pseudo-Anosov. Let $\bar \B_n$ denote the quotient $\B_n/Z(\B_n)$ and let $\bar x_i$ denote the image of $x_i$ in this quotient. Since the $x_i$ are pseudo-Anosov, so too are the $\bar x_i$. By a theorem of McCarthy~\cite{mccarthy}, there is a short exact sequence
\[
1 \to F \to C_{\bar \B_n}(\bar x_1) \to \Z \to 1,
\]
where $C_{\bar \B_n}(\bar x_1)$ is the centralizer of $x_1$ in $\bar \B_n$ and $F$ is a finite subgroup of $\bar \B_n$.  Since $\bar x_3$ commutes with $\bar x_1$ and is conjugate, it follows that $(\bar x_1 \bar x_3^{-1})^p$ lies in $F$ for some $p$.  In particular $\bar x_1 \bar x_3^{-1}$, hence $x_1x_3^{-1}$, is periodic.  We again use the fact that $\B_n'$ contains no nontrivial periodic elements to conclude that $x_1 x_3^{-1} = 1$.  By the fundamental lemma of totally symmetric sets (Lemma~\ref{lem:tss}), the set $\rho(Z_n)$ is a singleton and we may again apply Case 1 in order to conclude that $\rho$ is trivial, contradicting the assumption that the $x_i$ are pseudo-Anosov.

\bigskip

\noindent \emph{Case 3.} The basic strategy is to show that $\rho(\B_n')$ lies in $\Stab_{\B_n}(M)$, to decompose the latter using Lemma~\ref{lemma:cabling}, and then to inductively apply the argument of Case 2 to the resulting factors in order to derive a contradiction.  

For $n \geq 5$, the group $\B_n'$ is generated by the elements of the form $\sigma_1\sigma_i^{-1}$ where $2\leq i \leq n-1$; see \cite[p. 7]{linbp} or \cite[Prop. 3.1]{kordek}. For each such generator $\sigma_1\sigma_i^{-1}$, there exists a $\sigma_j^pz^{-1} \in Z_n$ that commutes with it.  This implies that each $\rho(\sigma_1\sigma_i^{-1})$ preserves $\Gamma(\rho(\sigma_j^pz^{-1}))$.  Since $M$ is trivially labeled, the latter is equal to $M$. Thus $\rho(\B_n')$ lies in $\Stab_{\B_n}(M)$.

Let $\Pi$ be the map from Lemma~\ref{lemma:cabling}.  The composition $\Pi \circ \rho : \B_n' \to \B_{\Delta_0}$ satisfies the hypothesis of Case 2, in that $\Gamma(\Pi \circ \rho (Z_n))$ is empty.  By the argument of Case 2, $\Pi \circ \rho$ is trivial (we cannot apply Case 2 verbatim since $\Delta_0$ has fewer than $n$ marked points).  Thus, the image of $\rho$ lies in the second factor of the decomposition of $\Stab_{\B_n}(M)$ given by Lemma~\ref{lemma:cabling}.  We may now iterate the argument on each factor.  After finitely many steps, we conclude that $\rho$ is trivial, a contradiction.  

\bigskip

\noindent \emph{Case 4.} In this case we will prove that $\rho$ is equivalent to the identity. The proof has five steps.  In the fourth step, we say that a sequence of curves $d_1,\dots,d_k$ in $\D_n$ forms a chain if each $d_i$ surrounds two marked points, if $i(d_i,d_{i+1})=2$ for $1 \leq i \leq k-1$, and if the $d_i$ are disjoint otherwise.  Also, let $a_1,\dots,a_{n-1}$ be the curves in $\D_n$ with the property that $\sigma_i = H_{a_i}$ for $1 \leq i \leq n-1$.  Note that $c_i = a_{2i-1}$ for $1 \leq i \leq \lfloor n/2 \rfloor$ and that $a_1,\dots,a_{n-1}$ form a chain.

%\medskip

\newpage

\begin{itemize}[leftmargin=12.5ex]
\item[\emph{Step 1.}] Up to equivalence, we have $\rho(\sigma_1\sigma_j^{-1}) = \sigma_1\sigma_j^{-1}$ for all odd $j$. 
\item[\emph{Step 2.}] For each even $i\geq 6$ there exists a curve $b_i$ such that $\rho(\sigma_1\sigma_i^{-1}) = \sigma_1H_{b_i}^{-1}$.
\item[\emph{Step 3.}] For $i\in \{2,4\}$ there exists a curve $b_i$ such that $\rho(\sigma_1\sigma_i^{-1}) = \sigma_1H_{b_i}^{-1}$.
\item[\emph{Step 4.}] The curves $a_1,b_2,a_3,b_4,a_5,b_6,\dots$ form a chain.
\item[\emph{Step 5.}] The homomorphism $\rho$ is equivalent to the inclusion map.
\end{itemize}
We complete the five steps in turn.  Step 3 is the only part of the proof that uses the assumption $n \geq 7$; Step 4 uses the assumption $n \geq 6$, and the rest of the proof only uses the assumption $n \geq 5$.  

\bigskip

\noindent \emph{Step 1. Up to equivalence, we have $\rho(\sigma_1\sigma_j^{-1}) = \sigma_1\sigma_j^{-1}$ for all odd $j$.}  

\bigskip

When $n$ is even, Lemma~\ref{lem:bnclass} implies there is a nonzero $\ell$ so that $\rho(Z_n)$ is $\B_n$-equivalent to one of the following totally symmetric sets:
\[
(X_n^{0,\ell})^{T_{c_0}^rz^s}\quad \text{or}\quad (X_n^{\ell, 0})^{T_{c_0}^rz^s}.
\]
When $n$ is odd, Lemma~\ref{lem:bnclass} implies that there is a nonzero $\ell$ so that $\rho(Z_n)$ is $\B_n$-equivalent to one of the following totally symmetric sets:
\[
(X_n^{0,\ell})^{z^s}\quad (X_n^{\ell, 0})^{z^s}\quad (X_n^{0,\ell})^{T_{c_0}^rz^s}\quad \text{or} \quad (X_n^{\ell, 0})^{T_{c_0}^rz^s}.
\]
Therefore, up to replacing $\rho$ by an equivalent homomorphism, we may assume that $\rho(Z_n)$ is equal to one of these sets.

We claim that there exists $q\in \Z$, depending only on $\rho$, such that 
\[
\rho(\sigma_{1}\sigma_{j}^{-1}) = (\sigma_{1}\sigma_{j}^{-1})^{q}.
\]
for all odd $j$ with $1 < j \leq n-1$.  Let $p = n(n-1)$.  Regardless of which of the above sets $\rho(Z_n)$ is equal to, we have
\[
\rho(\sigma_{1}\sigma_{j}^{-1})^p =\rho(\sigma_{1}^pz^{-1})\rho(\sigma_{j}^pz^{-1})^{-1} = (\sigma_{1}\sigma_{j}^{-1})^{\pm\ell}
\]
(in all cases, the $T_{c_0}^r$ and $z^s$ terms cancel each other; the sign of the exponent in the last term depends on whether we have $X_n^{\ell,0}$ or $X_n^{0,\ell}$).  In our earlier paper with Chen~\cite[Lemma 8.7]{chenkordekmargalit}, we proved for any $p$ that $(\sigma_{1}\sigma_{j}^{-1})^{\ell}$ has a $p$th root if and only if $p$ divides $\ell$ and in this case there is a unique root, namely, $(\sigma_{1}\sigma_{j}^{-1})^{\ell/p}$.  Setting $q = \pm \ell/p$ then gives the claim.

We next claim that $q=\pm 1$. Let $g$ be an element of $\B_n'$ such that
\[
g\sigma_1 g^{-1} = \sigma_2
\]
and such that $g$ commutes with each $\sigma_5$ (for example $g = \sigma_2^{-2}\sigma_1\sigma_2$). Then $g$ conjugates $\sigma_1\sigma_5^{-1}$ to $\sigma_2\sigma_5^{-1}$.
Define $b$ to be the curve with
\[
H_{b} = \rho(g)\sigma_1\rho(g)^{-1}.
\]
We then have that
\begin{align*}
\rho(\sigma_2\sigma_5^{-1}) = \rho\left(g(\sigma_1\sigma_5^{-1})g^{-1}\right) =  \rho(g)\rho(\sigma_1\sigma_5^{-1})\rho(g)^{-1} = \rho(g)\left(\sigma_1\sigma_5^{-1}\right)^\ell\rho(g)^{-1} = 
H_{b}^{q}\sigma_5^{-q}.
\end{align*}
The element $\sigma_1\sigma_5^{-1}$ satisfies a braid relation with $\sigma_2\sigma_5^{-1}$, and so $\rho(\sigma_1\sigma_5^{-1})$ satisfies a braid relation with $\rho(\sigma_2\sigma_5^{-1})$. It follows that $(\sigma_1\sigma_5^{-1})^{q}$ satisfies a braid relation with $H_b^q\sigma_5^{-q}$. Since $\sigma_5$ commutes with both $\sigma_1$ and $H_{b}$, we further have that $\sigma_1^q$ satisfies a braid relation with $H_{b}^q$. A result of Bell and second author~\cite[Lemma 4.9]{bellmargalit} states that if two half-twists $H^r_a$ and $H^{s}_b$ satisfy a braid relation, then $r=s = \pm 1$. The claim follows. 

If $q = 1$, then $\rho(\sigma_1\sigma_j^{-1}) = \sigma_1\sigma_j^{-1}$ for $j$ odd, as desired. If $q = -1$, then we may further postcompose $\rho$ with the inversion automorphism of $\B_n$ to obtain again that $\rho(\sigma_1\sigma_j^{-1}) = \sigma_1\sigma_j^{-1}$. This completes the first step.

\bigskip

\noindent \emph{Step 2. For each even $i\geq 6$ there exists a curve $b_i$ such that $\rho(\sigma_1\sigma_i^{-1}) = \sigma_1H_{b_i}^{-1}$.} 

\bigskip

Fix some $i\geq 5$. There exists $g_i\in \B_n'$ such that 
\[
g_i\sigma_5g_i^{-1} = \sigma_i
\] 
and such that $g_i$ commutes with each of $\sigma_1$ and $\sigma_3$; for instance we may take
\[
g_i = \sigma_i^{9-2i} (\sigma_{i-1} \cdots \sigma_5)(\sigma_i \cdots \sigma_5).
\]
Let $b_i$ be the curve such that
\[
H_{b_i} = \rho(g_i)\sigma_5\rho(g_i)^{-1}. 
\]
Since $g_i$  commutes with $\sigma_1$ and $\sigma_3$, and hence with $\sigma_1\sigma_3^{-1}$, it follows that $\rho(g_i)$ commutes with $\rho(\sigma_1\sigma_3^{-1}) = \sigma_1\sigma_3^{-1}$. It follows further that $\rho(g_i)$ commutes with each of $\sigma_1$ and $\sigma_3$ (it cannot be that $\rho(g_i)$ interchanges $a_1$ and $a_3$ because the signs of the half-twists differ).  Hence $H_{b_i}$ commutes with $\sigma_1$. 
 
Using the above properties of $g_i$ and the fact (from Step 1) that $\rho(\sigma_1\sigma_5^{-1}) = \sigma_1\sigma_5^{-1}$, we have
 \begin{align*}
 \rho(\sigma_1\sigma_i^{-1}) =\rho(g_i(\sigma_1\sigma_5^{-1})g_i^{-1}) &= \rho(g_i)\rho(\sigma_1\sigma_5^{-1})\rho(g_i)^{-1}
  = \rho(g_i)\sigma_1\sigma_5^{-1}\rho(g_i)^{-1}
  = \sigma_1H_{b_i}^{-1}.
 \end{align*}
This completes the second step.

\bigskip

\noindent\emph{Step 3. For $i\in \{2,4\}$ there exists a curve $b_i$ such that $\rho(\sigma_1\sigma_i^{-1}) = \sigma_1H_{b_i}^{-1}$.}  

\bigskip

First we treat the case $i=4$. Choose $g_4\in B_n'$ such that $g_4\sigma_3g_4^{-1} = \sigma_4$ and such that $g_4$ commutes with $\sigma_1$ and each $\sigma_i$ with $i\geq 6$ (for instance $g_4 = \sigma_4^{-2}\sigma_3 \sigma_4$). The second condition implies that $g_4$ commutes with $\sigma_1\sigma_6^{-1}$ and hence that $\rho(g_4)$ commutes with $\rho(\sigma_1\sigma_6^{-1}) = \sigma_1H_{b_6}^{-1}$. Equivalently, $\rho(g_4)$ commutes with $\sigma_1$ and $H_{b_6}$. Define $b_4$ to be the curve such that
\[
H_{b_4} = \rho(g_4)\sigma_3\rho(g_4)^{-1}
\]
We then have that
\[
\rho(\sigma_1\sigma_4^{-1}) = \rho(g_4(\sigma_1\sigma_3^{-1})g_4^{-1}) = \rho(g_4)(\sigma_1\sigma_3^{-1})\rho(g_4)^{-1} =  \sigma_1H_{b_4}^{-1}.
\]
This completes the $i=4$ case.

\bigskip

We now address the $i=2$ case;  this is similar to the $i=4$ case, but more complicated.  Choose $g_2\in B_n'$ such that $g_2\sigma_1g_2^{-1} = \sigma_2$ and such that $g_2$ commutes with each $\sigma_i$ with $i\geq 4$. The second condition implies that $g_2$ commutes with $\sigma_5\sigma_6^{-1}$ and hence that $\rho(g_2)$ commutes with $\rho(\sigma_5\sigma_6^{-1})$.  The latter is equal to $\sigma_5H_{b_6}^{-1}$; indeed,
\[
\rho(\sigma_5\sigma_6^{-1}) = \rho((\sigma_1\sigma_5^{-1})^{-1}(\sigma_1\sigma_6^{-1})) = (\sigma_1\sigma_5^{-1})^{-1}\sigma_1H_{b_6}^{-1} = \sigma_5H_{b_6}^{-1}.
\]
Thus, $\rho(g_2)$ commutes with $\sigma_5H_{b_6}^{-1}$. 

Next, the element $g_2$ commutes with $\sigma_4\sigma_6^{-1}$ so $\rho(g_2)$ commutes with $\rho(\sigma_4\sigma_6^{-1})$.  We have
\[
\rho(\sigma_4\sigma_6^{-1}) = \rho((\sigma_1\sigma_4^{-1})^{-1}(\sigma_1\sigma_6^{-1})) = (\sigma_1H_{b_4}^{-1})^{-1}(\sigma_1H_{b_6}^{-1}) = H_{b_4}H_{b_6}^{-1}
\]
Thus $\rho(g_2)$ also commutes with $H_{b_4}H_{b_6}^{-1}$.  It follows that $\rho(g_2)$ commutes with $H_{b_6}$ (as in Step 2, we can conclude this because the signs on $H_{b_4}$ and $H_{b_6}^{-1}$ differ). 

By the previous two pargraphs, $\rho(g_2)$ commutes with $\sigma_5H_{b_6}^{-1}$ and $H_{b_6}$.  It follows that $\rho(g_2)$ commutes with $\sigma_5$.  

Define $b_2$ to be the curve such that
\[
H_{b_2} = \rho(g_2)\sigma_1\rho(g_2)^{-1}
\]
We then have that
\[
\rho(\sigma_2\sigma_5^{-1}) = \rho(g_2(\sigma_1\sigma_5^{-1})g_2^{-1}) = \rho(g_2)(\sigma_1\sigma_5^{-1})\rho(g_2)^{-1} = \rho(g_2)\sigma_1\rho(g_2)^{-1}\sigma_5^{-1} = H_{b_2}\sigma_5^{-1}.
\]
Now
\[
\rho(\sigma_1\sigma_2^{-1}) = \rho((\sigma_1\sigma_5^{-1})(\sigma_2\sigma_5^{-1})^{-1}) = (\sigma_1\sigma_5^{-1})(H_{b_2}\sigma_5^{-1})^{-1} = \sigma_1H_{b_2}^{-1},
\]
as desired.

\bigskip

\noindent \emph{Step 4. The curves $a_1,b_2,a_3,b_4,a_5,b_6,\dots$ form a chain.} 

\bigskip

For $i$ odd let $b_i$ be the standard curve $a_i$; so we need to show that $b_1,\dots,b_{n-1}$ form a chain.  It follows from the definition of the $b_i$ and the fact that each $\sigma_i\sigma_j^{-1}$ is conjugate in $\B_n'$ to $\sigma_1\sigma_3^{-1}$ that each $b_i$ surrounds exactly two marked points.  To complete this step we must show that $i(b_i, b_j) = 0$ if $j-i \geq 2$ and that $i(b_i,b_{i+1}) = 2$ for each $1 \leq i \leq n-2$. 

We begin by showing that $i(b_i, b_j) = 0$ if $j-i \geq 2$.  In this case we have
\[
\rho(\sigma_i\sigma_j^{-1}) = \rho((\sigma_1\sigma_i^{-1})^{-1}(\sigma_1\sigma_j^{-1}))=H_{b_i}H_{b_j}^{-1}.
\]
Since $\sigma_i\sigma_j^{-1}$ is conjugate in $\B_n'$ to $\sigma_1\sigma_3^{-1}$ and since $\rho$ fixes the latter, it follows that $H_{b_i}H_{b_j}^{-1}$ is conjugate to $\sigma_1\sigma_3^{-1}$.  From this it follows that $b_i$ and $b_j$ are disjoint, as desired (if $b_i$ and $b_j$ were not disjoint, then $H_{b_i}H_{b_j}^{-1}$ would be a partial pseudo-Anosov braid).  

We now proceed to show that $i(b_i,b_{i+1}) = 2$ for each $1 \leq i \leq n-2$.  Here, it suffices to show that $H_{b_i}$ and $H_{b_{i+1}}$ satisfy the braid relation.  We already showed in Step 1 that $\sigma_1$ satisfies a braid relation with $H_{b_2}$ and so the $i=1$ case is settled. It remains to treat the cases $i \geq 3$ and $i=2$.  

First, fix some $i \geq 3$.  Since $\sigma_i$ satisfies a braid relation with $\sigma_{i+1}$, the element $\sigma_1\sigma_i^{-1}$ satisfies a braid relation with $\sigma_1\sigma_{i+1}^{-1}$. It follows that $\rho(\sigma_1\sigma_i^{-1})=\sigma_1H_{b_i}^{-1}$ satisfies a braid relation with $\rho(\sigma_1\sigma_{i+1}^{-1})=\sigma_1H_{b_{i+1}}^{-1}$. Since both $H_{b_i}$ and $H_{b_{i+1}}$ commute with $\sigma_1$, this implies that $H_{b_i}$ satisfies a braid relation with $H_{b_{i+1}}$, as desired.

Finally, we show that $H_{b_2}$ and $H_{b_3}$ satisfy the braid relation.  Similar to the previous paragraph, $\rho(\sigma_2\sigma_5^{-1})$ satisfies a braid relation with $\rho(\sigma_3\sigma_5^{-1})$. In Step 3 we showed that $\rho(\sigma_2\sigma_5^{-1}) = H_{b_2}\sigma_5^{-1}$.  Since $n \geq 6$ we also have
\[
\rho(\sigma_3\sigma_5^{-1}) = \rho\left((\sigma_1\sigma_3^{-1})^{-1}(\sigma_1\sigma_5^{-1})\right) = (\sigma_1\sigma_3^{-1})^{-1}(\sigma_1\sigma_5^{-1}) = \sigma_3\sigma_5^{-1}.
\]
It follows that $H_{b_2}$ and $\sigma_3=H_{b_3}$ satisfy the braid relation.  The completes the fourth step.

\bigskip

\noindent \emph{Step 5. The homomorphism $\rho$ is equivalent to the inclusion map.} 

\bigskip

Since the curves $b_1,b_2,\dots,b_{n-1}$ from Step 4 form a chain, there is an element $\alpha$ of $\B_n$ such that the curves $\alpha(b_1),\ldots, \alpha(b_{n-1})$ are equal to $a_1,\dots,a_{n-1}$, respectively (this is an instance of the change of coordinates principle for mapping class groups~\cite[Section 1.3]{primer}).

After replacing $\rho$ by its post-composition with the inner automorphism of $\B_n$ induced by $\alpha$, we have that 
\[
\rho(\sigma_1\sigma_i^{-1}) = \sigma_1\sigma_i^{-1}
\]
Since the elements $\sigma_1\sigma_i^{-1}$ generate $\B_n'$, it follows that $\rho$ is equal to the standard inclusion. This completes Step 5, and the theorem is proven. 
\end{proof}

%%%
%%%
%%%

\section{Homomorphisms between braid groups}
\label{sec:castel}

In this section we classify homomorphisms $\B_n \to \B_n$ for $n \geq 7$.  As discussed in the introduction, this is a special case of a theorem of Castel.  

Let $\rho : \B_n\rightarrow \B_n$ be a homomorphism, and let $k$ be an integer.  The \emph{transvection} of $\rho$ by $z^k$ is the homomorphism given by
\[
\rho^{z^k}(\sigma_i) = \rho(\sigma_i)z^k
\]
for all $1 \leq i \leq n-1$.  There is an equivalence relation on the set of homomorphisms $\B_n \to \B_n$ whereby $\rho_1 \sim \rho_2$ if $\rho_2 = \alpha \circ \rho_1^{z^k}$ for some automorphism $\alpha$ of $\B_n$ and some $k \in \Z$.  This notion of equivalence is more complicated than the one we defined for homomorphisms $\B_n' \to \B_n$ in the introduction, in that it involves the transvections.  There are no analogous transvections of homomorphisms $\B_n' \to \B_n$, since the image of $\B_n'$ must lie in $\B_n'$, and the only power of $z^k$ in $\B_n'$ is the identity.  

The following theorem represents the special case of Castel's theorem that we will prove.

\begin{theorem}[Castel]
Let $n \geq 7$, and let $\rho : \B_n \to \B_n$ be a homomorphism whose image is not cyclic.  Then $\rho$ is equivalent to the identity.
\end{theorem}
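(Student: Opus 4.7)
The plan is to reduce to Theorem~\ref{thm:main} by restricting $\rho$ to the commutator subgroup and then to pin down $\rho$ on the generators $\sigma_i$ via a short centralizer calculation. First, I would apply Theorem~\ref{thm:main} to $\rho|_{\B_n'}\colon \B_n' \to \B_n$: this restriction is either trivial or equivalent to the inclusion. If it is trivial, then $\rho$ factors through the abelianization $\B_n/\B_n' \cong \Z$, so $\rho(\B_n)$ is cyclic, contradicting our hypothesis. In the remaining case, after post-composing $\rho$ with a suitable automorphism of $\B_n$---an operation that preserves the equivalence class of $\rho$---I may assume $\rho$ restricts to the identity on $\B_n'$.

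Since the elements $\sigma_1\sigma_j^{-1}$ generate $\B_n'$ for $n \geq 5$, the identity $\rho(\sigma_1\sigma_j^{-1}) = \sigma_1\sigma_j^{-1}$ forces $c := \sigma_i^{-1}\rho(\sigma_i)$ to be independent of $i$, so $\rho(\sigma_i) = \sigma_i c$ for every generator. For any $h \in \B_n'$ the element $\sigma_i h \sigma_i^{-1}$ also lies in $\B_n'$, and applying $\rho$ yields $\sigma_i h \sigma_i^{-1} = \sigma_i c h c^{-1} \sigma_i^{-1}$, whence $c h c^{-1} = h$. Thus $c$ lies in the centralizer $C_{\B_n}(\B_n')$.

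The main obstacle is to upgrade $c \in C_{\B_n}(\B_n')$ to $c \in Z(\B_n)$. For this I would appeal to Orevkov's theorem, cited in the introduction, which says that the natural restriction map $\Aut(\B_n) \to \Aut(\B_n')$ is an isomorphism for $n \geq 4$; since $\B_n/Z(\B_n)$ embeds in $\Aut(\B_n)$ as the inner automorphisms, injectivity of restriction forces $C_{\B_n}(\B_n') = Z(\B_n) = \langle z \rangle$. Writing $c = z^k$, I obtain $\rho(\sigma_i) = \sigma_i z^k$ for all $i$, so $\rho$ is the transvection of the identity by $z^k$. Hence $\rho$ is equivalent to the identity, as claimed.
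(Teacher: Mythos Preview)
Your proof is correct and, like the paper's, begins by restricting to $\B_n'$ and invoking Theorem~\ref{thm:main} to reduce to the case where $\rho|_{\B_n'}$ is the inclusion. The divergence comes in the endgame. The paper first shows $\rho(z)$ is central and then analyzes $\rho(\sigma_i)$ directly via Nielsen--Thurston theory: from $\rho(\sigma_i)^p = \sigma_i^p z^{k-1}$ it reads off $\CRS(\rho(\sigma_i)) = \{a_i\}$, argues that the exterior Nielsen--Thurston component is trivial to force $\rho(\sigma_i) = \sigma_i^r z^s$, and finally pins down $r=1$ using $\rho(\sigma_1\sigma_3^{-1}) = \sigma_1\sigma_3^{-1}$. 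You instead observe that $c = \sigma_i^{-1}\rho(\sigma_i)$ is independent of $i$ and, by conjugating arbitrary elements of $\B_n'$ through $\rho$, that $c$ centralizes $\B_n'$; you then invoke Orevkov's theorem to conclude $c \in Z(\B_n)$. Your route is shorter and purely group-theoretic, bypassing canonical reduction systems entirely, at the cost of importing Orevkov as a black box, whereas the paper's argument stays within the geometric toolkit already developed. It is worth noting that your appeal to Orevkov can be replaced by a direct calculation in the same spirit as the paper: since $c$ commutes with $\sigma_i^{n(n-1)} z^{-1} \in \B_n'$, it commutes with $\sigma_i^{n(n-1)}$; but $C_{\B_n}(\sigma_i^k) = C_{\B_n}(\sigma_i)$ for $k \neq 0$ (both equal the stabilizer of $a_i$), so $c$ commutes with every $\sigma_i$ and is central.
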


\begin{proof}

Assume that $\rho: \B_n\rightarrow \B_n$ is a homomorphism with non-cyclic image. This is equivalent to the assumption that restriction $\rho'$ of $\rho$ to $\B_n'$ is nontrivial. Theorem~\ref{thm:main} then implies that there is an automorphism $\alpha$ of $\B_n$ such that $\alpha\circ \rho'$ is the identity. Thus replacing $\rho$ by $\alpha\circ \rho$, we may assume that $\rho'$ is the inclusion map.

We claim that $\rho(z)$ is equal to $z^k$ for some integer $k$. As in Section~\ref{sec:proof} let $p = n(n-1)$.  Since $z\in \B_n$ is central, we have that $\rho(z)$ commutes with each $\rho(\sigma_i^pz^{-1})$. Since 
\[
\rho(\sigma_i^pz^{-1}) = \rho'(\sigma_i^pz^{-1}) = \sigma_i^pz^{-1},
\]
it follows that $\rho(z)$ commutes with each $\sigma_i^p$, hence with each $\sigma_i$.  The claim follows.  

We next claim that $\rho(\sigma_i)^p = \sigma_i^pz^{k-1}$ for each $i$.  By the previous claim we indeed have
\[
\sigma_i^pz^{-1} = \rho(\sigma_i^pz^{-1}) = \rho(\sigma_i^p)\rho(z)^{-1} = \rho(\sigma_i^p)z^{-k} = \rho(\sigma_i)^pz^{-k},
\]
whence the claim.

We now claim that there exist integers $r$ and $s$ such that for all $1\leq i\leq n-1$ we have
\[
\rho(\sigma_i) = \sigma_i^rz^s.
\]
By the previous claim, the canonical reduction system of $\rho(\sigma_i)$ is equal to $a_i$. Since $\sigma_1$ commutes with $\sigma_j$ for $j \geq 3$, we have that $\rho(\sigma_1)$ fixes each $a_j$ with $j \geq 3$.  Thus the Nielsen--Thurston component of $\rho(\sigma_1)$ corresponding to the region between $a_1$ and the boundary of $\D_n$ cannot be pseudo-Anosov or a nontrival periodic element.  It follows that $\rho(\sigma_1) = \sigma_1^rz^s$ for some $r$ and $s$.  Since $\rho(\sigma_i)$ is conjugate to $\rho(\sigma_1)$ for $1 \leq i \leq n-1$, the claim follows.

Next we claim that $r=1$.  We have
\[
\sigma_1\sigma_3^{-1} = \rho(\sigma_1\sigma_3^{-1}) = \sigma_1^rz^s\sigma_3^{-r}z^{-s} = \sigma_1^r\sigma_3^{-r},
\]
whence the claim.

We now have that $\rho(\sigma_i) = \sigma_iz^s$ for $1 \leq i \leq n-1$.  The transvection of  $\rho$ by  $z^{-s}$ is then equal to the identity. This completes the proof of the theorem. 
\end{proof}

\bibliographystyle{plain}
\bibliography{bnprime}

\end{document}